\documentclass[reqno, 12pt]{amsart}

\makeatletter
\let\origsection=\section \def\section{\@ifstar{\origsection*}{\mysection}} 
\def\mysection{\@startsection{section}{1}\z@{.7\linespacing\@plus\linespacing}{.5\linespacing}{\normalfont\scshape\centering\S}}
\makeatother    

\usepackage{graphicx}
\linespread{1.15}
\usepackage{geometry}
\geometry{left=27.5mm,right=27.5mm, top=25mm, bottom=25mm, marginparwidth=20mm}
\numberwithin{equation}{section}
\numberwithin{figure}{section}

\usepackage{xcolor}
\usepackage{hyperref}
\hypersetup{
    unicode,
    colorlinks,
    linkcolor={red!60!black},
    citecolor={green!60!black},
    urlcolor={blue!60!black}
}

\usepackage{amsfonts,amsthm,amssymb,amsmath}
\usepackage{graphicx}
\usepackage{tikz}
\usepackage[usenames,dvipsnames]{pstricks}
\usepackage{epsfig}
\usepackage{enumitem}
\setlist[enumerate]{label=(\arabic*), ref=(\arabic*)}
\usepackage{verbatim}
\usepackage{soul}
\usepackage{thmtools}
\usepackage[capitalise, nameinlink]{cleveref}
\usepackage{mathtools}
\usepackage[a]{esvect}
\usepackage[utf8]{inputenc}
\usepackage[T1]{fontenc}
\usepackage{lmodern}
\usepackage[babel]{microtype}

\let\polishlcross=\l
\def\l{\ifmmode\ell\else\polishlcross\fi}
\let\emptyset=\varnothing

\let\theta=\vartheta
\let\rho=\varrho
\let\phi=\varphi

\newcommand{\set}[2]{{\{ #1 \ | \ #2 \}}}

\makeatletter
\newcommand{\nameditemlabel}[2]{\item[#1]\begingroup\def\@currentlabel{#1}\phantomsection\label{#2}\endgroup}
\newcommand{\itemlabel}[1]{\item\begingroup\def\@currentlabel{\labelenumi}\phantomsection\label{#1}\endgroup}
\makeatother

\renewcommand{\labelenumi}{\textit{(\roman{enumi})}}

\theoremstyle{plain}
\newtheorem{theorem}{Theorem}[section]

\newtheorem{corollary}[theorem]{Corollary}
\newtheorem{lemma}[theorem]{Lemma}
\newtheorem{claim}[theorem]{Claim}

\theoremstyle{definition}
\newtheorem{definition}[theorem]{Definition}
\newtheorem{remark}[theorem]{Remark}

\newtheoremstyle{note}{5pt}{5pt}{}{}{\itshape}{:}{ }{}
\theoremstyle{note}

\newtheoremstyle{case}{5pt}{5pt}{}{}{\bfseries}{:}{ }{}
\theoremstyle{case}
\newtheorem{case}{Case}

\usepackage{etoolbox}
\AtBeginEnvironment{proof}{\setcounter{step}{0}}
\AtBeginEnvironment{proof}{\setcounter{case}{0}}
\AtBeginEnvironment{case}{\setcounter{subcase}{0}}

\newcommand{\N}{\mathbb{N}}

\newcommand{\A}{\mathcal{A}}
\newcommand{\C}{\mathcal{C}}
\newcommand{\D}{\mathcal{D}}
\newcommand{\V}{\mathcal{V}}

\newcommand{\PP}{\mathcal{P}}
\newcommand{\NN}{\mathcal{N}}

\newcommand{\singleset}[1]{{\{#1\}}}

\newcommand{\then}{\ \Rightarrow \ }

\newcommand{\cupdot}{\mathbin{\dot{\cup}}}
\newcommand{\minset}[1]{\min\{#1\}}

\newcommand{\pairs}[1]{[#1]^2}
\newcommand{\range}[2]{\singleset{#1, \ldots, #2}}
\newcommand{\family}[2]{\singleset{#1}_{#2}}
\newcommand{\floor}[1]{\left\lfloor #1 \right\rfloor}

\newcommand{\xy}{\singleset{x, y}}
\newcommand{\Cless}{\C_{< \infty}}
\newcommand{\Cinf}{\C_\infty}
\newcommand{\alambda}{\vv{\lambda}}

\title{The strong Nash-Williams orientation theorem for rayless graphs}
\author{Max Pitz}
\author{Jacob Stegemann}
\address{Universit\"at Hamburg, Department of Mathematics, Bundesstrasse 55 (Geomatikum), 20146 Hamburg, Germany}
\email{\{max.pitz, jacob.stegemann\}@uni-hamburg.de}

\begin{document}

\begin{abstract}
    In 1960, Nash-Williams proved his strong orientation theorem that every finite graph has an orientation in which the number of directed paths between any two vertices is at least half the number of undirected paths between them (rounded down).

    Nash-Williams conjectured that it is possible to find such orientations for infinite graphs as well.
    We provide a partial answer by proving that all rayless graphs have such an orientation.
\end{abstract}

\maketitle

\section{Introduction}

For two vertices $x,y$ in a multi-graph $G$ let $\lambda(x,y)$ denote the maximum number of edge-disjoint paths from $x$ to $y$,
and for two vertices $x,y$ in an oriented multi-graph $\vv{G}$ let $\alambda(x,y)$ denote the maximum number of arc-disjoint directed paths from $x$ to $y$.
An orientation $\vv{G}$ of a multigraph $G$ is \textbf{well-balanced} if
\[
    \alambda(x,y) \geq \left\lfloor \frac{\lambda(x,y)}{2} \right\rfloor
\]
for any two distinct vertices $x,y \in G$.

The \emph{Strong orientation theorem} of Nash-Williams from 1960 says that every finite multigraph admits a well-balanced orientation \cite{NashWilliams1960}.
In the same paper, Nash-Williams claimed that his result also holds for infinite graphs as well -- but 10 years later, Nash-Williams retracted his claim in \cite[\S~8]{nash1969well}, and it has remained open ever since whether the strong orientation theorem holds for infinite graphs as well. 

The strong orientation theorem implies in particular that every finite $2k$-edge-connected multigraph has a $k$-arc-connected orientation. 
Regarding this global version for infinite graphs, a breakthrough result by Thomassen from 2015 asserts that every (infinite) $8k$-edge-connected multigraph has a $k$-arc-connected orientation \cite{thomassen2016orientations}.
This result was recently improved to a bound of $4k$, and the optimal $2k$ bound for locally finite graphs with countably many ends \cite{assem2025nash}.
However, establishing the optimal $2k$ bound for \emph{all} graphs remains a challenging open problem.

In contrast, no such affirmative results for infinite graphs seem to be known for the original strong orientation problem.
The purpose of the present paper is to improve this state of affairs by establishing the strong Nash-Williams orientation conjecture for all rayless graphs.

\begin{restatable}{theorem}{thmraylessintro}\label{thm:rayless-intro}
    Every rayless graph admits a well-balanced orientation.
\end{restatable}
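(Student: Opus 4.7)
The natural approach is transfinite induction on the Schmidt rayless rank $\rho(G)$ of $G$. The base case corresponds to finite graphs and is precisely the original Nash-Williams orientation theorem of 1960. For the inductive step, suppose $G$ has rank $\alpha$ and, by the definition of the rank, pick a finite set $S \subseteq V(G)$ such that every component of $G - S$ has rank strictly smaller than $\alpha$.

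The main strategic choice is the order in which the edges get oriented. My plan is to first pre-orient, in a globally consistent way, the edges meeting $S$, and then to extend into each component of $G - S$ using the inductive hypothesis with this boundary behaviour baked in. Since $S$ is finite but there may be infinitely many components of $G - S$, the pre-orientation must accommodate all components at once: I would construct a finite auxiliary multigraph on $S$ whose edge multiplicities record the aggregate cross-$S$ edge-connectivity contributed by the components of $G - S$, and orient this multigraph via the finite Nash-Williams theorem. For each component $C$ of $G - S$, I would then apply the inductive hypothesis to an auxiliary graph obtained from $G[C \cup S]$ by replacing the (now oriented) $S$-incident edges with a small finite gadget encoding the prescribed directed flow; this modified graph should again be rayless of rank below $\alpha$, so induction applies to orient the remaining interior edges of $C$.

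The hardest step is verifying the well-balanced inequality $\alambda(x,y) \geq \lfloor \lambda(x,y)/2 \rfloor$ for all pairs $(x,y)$ simultaneously after gluing. Pairs inside a single component of $G-S$ follow from the induction applied to the corresponding auxiliary graph, and pairs inside $S$ from the pre-orientation of the finite auxiliary multigraph. The delicate cases are pairs in different components and pairs for which $\lambda(x,y)$ is infinite: there, directed paths have to be assembled by concatenating inductively produced segments through the pre-oriented $S$-edges, and one must ensure that the $S$-bundles carry enough directed capacity to serve all such pairs at once. Calibrating the multiplicities of the auxiliary multigraph -- and designing the finite gadget so that the rank drops while the relevant edge-connectivity values between pairs in $S$ are preserved -- is where I expect the main technical obstacle to sit; this is effectively the infinite-graph analogue of the splitting-off manoeuvres driving Nash-Williams's original finite proof.
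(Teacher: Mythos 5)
Your outer shell --- transfinite induction on Schmidt rank with a finite reducing set $S$ --- is the same as the paper's, but the proposal has two genuine gaps. First, you never reduce to countable graphs. The paper first proves \cref{thm:reduction-to-countable-case} via Laviolette's bond-faithful decomposition and then runs the rank induction only for \emph{countable} rayless graphs (\cref{thm_countablerayless}). This is essential: for uncountable $\lambda_G(x,y)$, well-balanced demands $\alambda_G(x,y)\geq\lambda_G(x,y)$ exactly, and the contraction/lifting lemmas that power the induction (\cref{thm:1}, \cref{thm:2}, \cref{cor:edge-decontraction}) are stated and valid only for countable $G$, where it suffices to produce arbitrarily large finite families of arc-disjoint directed paths. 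A rank induction applied directly to an uncountable graph has no analogue of this machinery. (Relatedly, your base case is not ``precisely'' finite Nash-Williams: $o(G)=0$ allows infinitely many parallel edges, which needs the extra contraction argument of \cref{lem:induction-start}.)

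Second, your pre-orientation strategy runs in the wrong order. You propose to first fix the orientation of the $S$-incident edges via a finite auxiliary multigraph on $S$ encoding aggregate cross-$S$ connectivity, then extend into each component with a gadget that ``bakes in'' the boundary behaviour. But a finite summary multigraph on $S$ cannot record \emph{which} of the (possibly infinitely many) components certify connectivity between \emph{which} pairs of $S$-vertices, and committing to the $S$-orientation up front removes exactly the flexibility the argument needs. The paper instead orients each $\overline{C}/N(C)$ independently by induction --- contracting the whole boundary $N(C)$ to a single vertex, which drops the rank with no gadget required --- and reconciles the boundary afterwards by \emph{flipping whole-component orientations} so that $\sim$-equivalent $S$-vertices get infinite directed connectivity in both directions (\cref{eq:alambda-E-L} and thus \cref{eq:step-1}); the pieces are then glued along an adhesion-$\leq 1$ tree-decomposition of $G/{\sim}$ and lifted back to $G$ via \cref{cor:edge-decontraction}. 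Without a counterpart to this per-component flipping step, assembling directed paths through $S$ for pairs lying in different components, or with infinite $\lambda$, does not go through.
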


That the rayless case is often more tractable than the general case is not unheard of:
See \cite{bruhn2010every} for a proof of the unfriendly partition conjecture for rayless graphs, and \cite{aharoni1983menger,polat1983extension} for proofs of the  Erd\H{o}s-Menger conjecture for rayless graphs, more than 30 years before its eventual solution due to Aharoni and Berger in \cite{aharoni2009menger}. 

As an important step towards \cref{thm:rayless-intro}, we reduce the strong orientation problem to the countable case, thereby extending the corresponding result of Thomassen in the case of global connectivity \cite[\S8]{thomassen2016orientations}.
This is done via the following reduction result that also caters to the rayless case.

\begin{restatable}{theorem}{reductionIntro}\label{thm:reduction-to-countable-case}
    Let $\A$ be a class of graphs that is closed under subgraphs.
    If every countable graph in $\A$ has a well-balanced orientation, then all graphs in $\A$ have a well-balanced orientation.
\end{restatable}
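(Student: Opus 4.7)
The natural approach is transfinite induction on $\kappa = |V(G)|$, with the base case $\kappa \leq \aleph_0$ being immediate from the hypothesis. For the inductive step with $\kappa > \aleph_0$, I would first construct a continuous chain $(G_\alpha)_{\alpha<\kappa}$ of subgraphs of $G$, each of cardinality $<\kappa$ and all lying in $\A$ (by subgraph-closure), exhausting $G$ and ``$\lambda$-elementary'' in the sense that $\lambda_{G_\alpha}(x,y) = \lambda_G(x,y)$ for every $x,y \in V(G_\alpha)$ (both values allowed to be infinite). This is the usual bookkeeping/closure construction: at each successor step one adjoins some fresh vertices from a fixed enumeration together with witnessing systems of edge-disjoint paths realising the edge-connectivity between every pair of vertices currently present. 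The induction hypothesis then yields a well-balanced orientation $D_\alpha$ of each $G_\alpha$.

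The difficulty is to assemble the $D_\alpha$'s into a well-balanced orientation of $G$. A first attempt is to extend each $D_\alpha$ arbitrarily to an orientation $\tilde D_\alpha$ of $G$, view $(\tilde D_\alpha)$ as a net in the compact space $\{+,-\}^{E(G)}$, pass to a convergent subnet by Tychonoff, and take its limit $D^\ast$. For every pair $(x,y)$ with $\lambda_G(x,y) = m < \infty$ and $k = \lfloor m/2 \rfloor$, the $\lambda$-elementarity forces $\alambda_{\tilde D_\alpha}(x,y) \ge k$ eventually, so that any \emph{finite} $x$-$y$ cut $C$ in $G$ inherits $|C^+(D^\ast)| \ge k$ by the stability of product convergence on the finite set $C$.

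The main obstacle is that this is not enough: $x$-$y$ cuts that are infinite -- such as the cut of all edges at an endpoint of infinite degree -- need not inherit the bound, because the ``forward'' edges of such a cut can shift along the net without any single edge stabilising to forward. An analogous issue arises for pairs with $\lambda_G(x,y) = \infty$, where one needs arbitrarily many arc-disjoint directed paths and the well-balanced condition becomes merely $G_\delta$. I expect this to be the crux of the proof, and anticipate that the fix goes through a more coherent construction of the $D_\alpha$'s: either arranging chain and orientations so that $D_\beta \subseteq D_\alpha$ for $\beta \leq \alpha$ via an extension lemma, whose proof reduces -- using cycle-reversal surgeries that preserve well-balancedness -- to the countable hypothesis applied to a suitable countable slice around the newly added vertices; or else bypassing compactness altogether and performing a transfinite recursion which, at each step, commits to specific arc-disjoint witnessing paths for the next pair while preserving a mixed-graph Menger invariant for the remaining pairs. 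The technical core will be the compatibility/extension lemma underlying either of these variants.
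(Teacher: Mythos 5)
Your plan correctly identifies its own crux, but it does not close the gap, and the two proposed fixes are left as speculation rather than arguments. The compactness attempt genuinely fails for the reasons you state: infinite $x$--$y$ cuts need not stabilise under a convergent subnet, and the $\lambda_G(x,y)=\infty$ case only yields a $G_\delta$ condition, so a Tychonoff limit gives no control. Your proposed fix (1), a coherent chain with $D_\beta\subseteq D_\alpha$, hinges on an extension lemma (``a well-balanced orientation of $G_\beta$ extends to a well-balanced orientation of $G_\alpha$''), but no such lemma is stated, let alone proved, and it is not clear it is true in the generality needed: adjoining a single vertex with finitely many edges can already force reversals arbitrarily deep inside $G_\beta$. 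Your fix (2) is likewise a name for a strategy, not a construction. As submitted, the proof does not exist.

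The paper takes a structurally different route that sidesteps coherence entirely: instead of a chain of \emph{vertex-induced} subgraphs of increasing size, it decomposes the \emph{edge set}. By Laviolette's decomposition theorem, $G$ admits a bond-faithful decomposition $\family{G_i}{i\in I}$ into countable connected fragments; these are subgraphs of $G$ and hence in $\A$, so each gets a well-balanced orientation $\vv{G_i}$ by hypothesis, and because the fragments are pairwise edge-disjoint, $\vv{G}\coloneq\bigcup_i \vv{G_i}$ is automatically a single well-defined orientation with no compatibility to arrange. The work then shifts to showing this union is well-balanced: the key is the notion of an \emph{efficient path} (meeting each fragment in at most one nontrivial segment) combined with bond-faithfulness, which guarantees $\lambda_{G_{i_m}}(x_m,x_{m+1}) \geq \min\{\lambda_G(x,y),\aleph_0\}$ along the segments of an efficient path (\cref{lem:decomposition-connectivity}); a separate transfinite recursion handles $\lambda_G(x,y)\geq\aleph_1$ by building $\lambda$-many pairwise disjoint finite subfamilies $I_\alpha\subseteq I$, each already yielding $\aleph_0$ many arc-disjoint paths. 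What this buys over your approach is exactly the absence of any extension/coherence lemma: edge-disjointness makes gluing trivial, and bond-faithfulness replaces $\lambda$-elementarity as the mechanism by which local connectivity controls global connectivity. If you want to salvage a chain-based argument, you would at minimum need to prove the extension lemma you invoke, and I would expect that to be at least as hard as the Laviolette route.
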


By letting $\A$ be the class of all [rayless] graphs, we see that it suffices to prove the strong orientation conjecture for countable [rayless] graphs.

\section{Preliminaries}\label{section:preparation}

We denote with $\N = \{0,1,2,3,\ldots\}$ the set of natural numbers including $0$.
For $n \in \N$ we write $[n] \coloneq \range{0}{n}$.
In this paper, graphs may have multiple parallel edges but no loops.
A graph in the classical sense (i.e.\ without multiple edges) will be called a ``simple graph''.
Formally, we consider graphs defined as follows.

A \textbf{graph} $G$ is a triple $(V, E, f)$, where $V$ and $E$ are sets and $f$ is a map
\[
    f \colon E \to \pairs{V}.
\]
The set $V(G) \coloneq V$ is called the \textbf{vertices}, $E(G) \coloneq E$ the \textbf{edges} and $f$ the \textbf{incidence map} of $G$.
For two distinct vertices $x, y \in V(G)$ we write $E_G(x, y) \coloneq f^{-1}(\xy)$ for the set of edges between $x$ and $y$.
For two subsets $X, Y \subseteq V(G)$ we write $E_G(X, Y)$ for the set of edges with one endvertex in $X$ and the other in $Y$.
An \textbf{oriented graph} $G$ is a triple $(V, E, f)$, where
\[
    f \colon E \to \set{(x, y) \in V^2}{x \neq y}.
\]

For a graph $G$ write $\C(G)$ for the set of its components.
If $H \subseteq G$ and $C \in \C(G - H)$ is a component, write $N(C)$ for the neighbors of $C$ in $H$.
For two distinct vertices $x, y \in V(G)$ we write $\lambda_G(x, y)$ for the cardinality of the edge-connectivity between $x$ and $y$ in $G$.

We call an orientated graph $D$ with incidence map $f_D$ an \textbf{orientation} of a graph $G$ with incidence map $f_G$ if $V(D) = V(G)$, $E(D) = E(G)$ and $f_G(e) = xy$\footnote{
    This is a shorthand notation for the set $\xy$.
} if and only if $f_D(e) = (x, y)$ or $f_D(e) = (y, x)$ for all edges $e \in E(G)$ and vertices $x, y \in V(G)$.     In the following we will only ever consider one distinguished orientation $D$ for a graph $G$.
Therefore, we will always write $\vv{G}$ for $D$ and also $\alambda_G$ for $\lambda_D$.

All other terminology for graphs is as in \cite{Diestel}.

We now introduce the main definition of this paper:

\begin{definition}
    An orientation $\vv{G}$ of a graph $G$ is \textbf{well-balanced} if
    \[
        \alambda_G(x, y) \geq \begin{cases}
            \floor{\frac{\lambda_G(x, y)}{2}} & \text{if } \lambda_G(x, y) < \infty,\\
            \lambda_G(x, y) & \text{else.}
        \end{cases}
    \]
    for all distinct vertices $x, y \in V(G)$.
\end{definition}

So in particular, if $\lambda_G(x,y)$ is an infinite cardinal such as $\aleph_0,\aleph_1,\ldots$, then there should be as many oriented paths in $\vv{G}$ as there were unoriented paths in $G$, as intuitively, dividing by 2 makes no difference to an infinite set. 

Note that for countable graphs, no such distinction into different infinite cardinalities is necessary, and an oriented graph is well-balanced if 
 \[
        \alambda_G(x, y) \geq \begin{cases}
            \floor{\frac{\lambda_G(x, y)}{2}} & \text{if } \lambda_G(x, y) < \infty,\\
            \infty & \text{else.}
        \end{cases}
    \]
 for all distinct vertices $x, y \in V(G)$.

\section{Well-balanced orientations and tree-decompositions}

We begin by proving a result essentially saying that it suffices to prove the orientation conjecture for $2$-connected graphs, as we can combine well-balanced orientations of the individual blocks to obtain a well-balanced orientation of the whole graph.

For technical reasons, we state this observation in terms of tree-decompositions of adhesion $\leq 1$ as follows.

\begin{definition}
    Let $G$ be a graph with incidence map $f$.
    A pair $(T, \V)$, where $T$ is a tree and $\V = \family{V_t}{t \in V(T)}$ a collection of subsets $V_t \subseteq V(G)$ for all $t \in V(T)$ is called a \textbf{tree-decomposition} of $G$ if
    \begin{itemize}[align=left, leftmargin=1em]
        \nameditemlabel{(T1)}{it:tree-decomposition-t1} $V(G) = \bigcup_{t \in V(T)} V_t$,
        \nameditemlabel{(T2)}{it:tree-decomposition-t2} $\forall e \in E(G) \ \exists t \in V(T) \colon f(e) \subseteq V_t$ and
        \nameditemlabel{(T3)}{it:tree-decomposition-t3} $\forall t_1, t_2, t_3 \in V(T) \colon t_2 \in V(t_1 T t_3) \then V_{t_1} \cap V_{t_3} \subseteq V_{t_2}$.
    \end{itemize}
    The induced subgraphs $G[V_t]$ for $t \in V(T)$ are called the \textbf{parts} of the tree-decomposition,
    and the sets $V_{t_1} \cap V_{t_2}$ for edges $t_1 t_2 \in E(T)$ are its \textbf{adhesion sets}.
    The tree-decomposition has \textbf{adhesion $\leq k$} for $k \in \N$ if all adhesion sets have size $\leq k$ elements.
\end{definition}

The various parts of a tree-decomposition are in general not edge-disjoint,  but in the following special case, they are.

\begin{lemma}
    Let $G$ be a graph and $(T, \V)$ with $\V = \family{V_t}{t \in V(T)}$ be a tree-decomposition of adhesion $\leq 1$ of $G$.
    Then the parts of $(T, \V)$ induce a partition of the edge set of $G$.
\end{lemma}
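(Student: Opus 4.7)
The plan is to verify the two defining properties of a partition: every edge lies in at least one part, and no edge lies in two distinct parts.

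For coverage, I would appeal directly to axiom \ref{it:tree-decomposition-t2}: given any edge $e \in E(G)$ with $f(e) = \{x,y\}$, there exists $t \in V(T)$ with $\{x,y\} \subseteq V_t$, so $e \in E(G[V_t])$. Hence $E(G) = \bigcup_{t \in V(T)} E(G[V_t])$, which handles the easy half.

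The substantive step is disjointness. Suppose for contradiction that some edge $e$ with $f(e) = \{x,y\}$ lies in both $E(G[V_{t_1}])$ and $E(G[V_{t_2}])$ for distinct $t_1, t_2 \in V(T)$. Then $\{x,y\} \subseteq V_{t_1} \cap V_{t_2}$. Pick any vertex $s$ on the unique $t_1$–$t_2$ path in $T$; by axiom \ref{it:tree-decomposition-t3} we have $\{x,y\} \subseteq V_s$. Applying this to the two endpoints of the first edge $t_1 t' \in E(T)$ on the $t_1$–$t_2$ path, we obtain $\{x,y\} \subseteq V_{t_1} \cap V_{t'}$, which is an adhesion set. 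Since $x \neq y$ (edges have two distinct endpoints, by our convention $f(e) \in \pairs{V}$), this adhesion set has at least two elements, contradicting the adhesion $\leq 1$ hypothesis.

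I do not anticipate a real obstacle here; the proof is essentially a routine unpacking of the tree-decomposition axioms, with the only subtlety being to remember that the absence of loops (so $|f(e)| = 2$) is what makes the adhesion bound bite. A clean write-up would separate the two inclusions and invoke \ref{it:tree-decomposition-t2} and \ref{it:tree-decomposition-t3} by name, so the reader sees exactly where each axiom is used.
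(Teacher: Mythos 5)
Your proof is correct and follows essentially the same route as the paper's: coverage via \ref{it:tree-decomposition-t2}, and disjointness by using \ref{it:tree-decomposition-t3} to push both endpoints of the edge into an adhesion set along the $t_1$--$t_2$ path, contradicting adhesion $\leq 1$ since $|f(e)| = 2$.
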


\begin{proof}
    By \ref{it:tree-decomposition-t1} and \ref{it:tree-decomposition-t2} the union of all parts is $G$.
    We are left to show that the parts are edge disjoint.
    Suppose there exists an edge $e \in E(G)$, such that $f(e) \subseteq V_{t_1}$ and $f(e) \subseteq V_{t_3}$ for distinct $t_1, t_3 \in V(T)$.
    Then every $V_{t_2}$ for $t_2 \in V(t_1 T t_3)$ contains the vertices $f(e)$ by \ref{it:tree-decomposition-t3}.
    Therefore, every adhesion set corresponding to edges on the path $t_1 T t_3$ also contains $f(e)$ (there exists an edge since $t_1 \neq t_3$).
    But $|f(e)| = 2$ is greater than $1$.
\end{proof}

The previous lemma allows us to use the parts of a tree-decomposition of adhesion $\leq 1$ to ``glue'' orientations together as follows.

\begin{lemma}\label{lem:well-balanced-cut-decomposition}
    Let $G$ be a graph and $(T, \V)$ with $\V = \family{V_t}{t \in V(T)}$ be a tree-decomposition of adhesion $\leq 1$ of $G$.
    Assume that for all $t \in V(T)$ there exists a well-balanced orientation $\vv{G[V_t]}$.
    Then $\vv{G} \coloneq \bigcup_{t \in V(T)} \vv{G[V_t]}$ is a well-balanced orientation of $G$.
\end{lemma}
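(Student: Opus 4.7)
The plan is to reduce to the \emph{co-parted} case — where $x, y$ both lie in some common part $V_t$ — in which the hypothesis on parts directly applies. In general, $x$ and $y$ are linked through a chain of cut-vertices belonging to consecutive parts, and the well-balanced orientations of those parts can be glued by concatenating directed paths. First, $\vv{G}$ is a well-defined orientation because the previous lemma shows that the parts partition $E(G)$, so every edge receives exactly one orientation. For the rest, fix distinct $x, y \in V(G)$ and consider the subtrees $T_x = \{t \in V(T) : x \in V_t\}$ and $T_y = \{t \in V(T) : y \in V_t\}$, which are non-empty by (T1) and connected by (T3).

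\emph{Co-parted case:} If $T_x \cap T_y \neq \emptyset$, pick $t^* \in T_x \cap T_y$, so $x, y \in V_{t^*}$. I claim that every $x$--$y$ path in $G$ lies entirely in $G[V_{t^*}]$. Indeed, by adhesion $\leq 1$, the set $V(G) \setminus V_{t^*}$ splits into branches (one per component of $T - t^*$), each attached to $V_{t^*}$ through at most one adhesion vertex; a simple path straying from $V_{t^*}$ into a branch could only return through that same attachment vertex, which it cannot reuse, so it must terminate in the branch — impossible since $y \in V_{t^*}$. Hence $\lambda_G(x, y) = \lambda_{G[V_{t^*}]}(x, y)$ and $\alambda_G(x, y) \geq \alambda_{G[V_{t^*}]}(x, y)$, and the well-balancedness of $\vv{G[V_{t^*}]}$ closes the case.

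\emph{Non-co-parted case:} If $T_x \cap T_y = \emptyset$, take the unique $T$-path $s_0 s_1 \cdots s_k$ from $T_x$ to $T_y$ with $s_0 \in T_x$ and $s_k \in T_y$. If some adhesion set on this path is empty then $\lambda_G(x, y) = 0$ and the claim is trivial; otherwise let $v_i$ be the single element of $V_{s_{i-1}} \cap V_{s_i}$. Property (T3) forces equal $v_i$'s to appear in consecutive blocks, so collapsing duplicates produces a chain of distinct vertices $x = w_0, w_1, \dots, w_m, w_{m+1} = y$ and associated parts $V_{t^0}, \dots, V_{t^m}$ satisfying: (a) $w_p, w_{p+1} \in V_{t^p}$; (b) consecutive chain parts $V_{t^p}, V_{t^{p+1}}$ meet exactly in $\{w_{p+1}\}$, and non-consecutive chain parts are vertex-disjoint; (c) each interior $w_p$ separates $x$ from $y$ in $G$. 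From (c), any edge-disjoint family of $x$--$y$ paths splits at the $w_p$'s into edge-disjoint families of $w_p$--$w_{p+1}$ subpaths, yielding $\lambda_G(x, y) \leq \min_p \lambda_G(w_p, w_{p+1})$. Conversely, by (a) and (b), edge-disjoint (directed) $w_p$--$w_{p+1}$ paths in the chain parts can be concatenated index-wise into edge-disjoint (directed) $x$--$y$ paths — the parts are edge-disjoint by the previous lemma and vertex-disjoint away from the chain vertices. Applying the co-parted case to each pair $(w_p, w_{p+1})$ in $V_{t^p}$ gives $\alambda_G(w_p, w_{p+1}) \geq \lfloor \lambda_G(w_p, w_{p+1})/2 \rfloor$; taking the min then yields $\alambda_G(x, y) \geq \lfloor \lambda_G(x, y)/2 \rfloor$. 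For infinite $\lambda_G(x, y)$ the same argument applies using $\lfloor \kappa/2 \rfloor = \kappa$.

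The main obstacle I anticipate is setting up the cut-vertex chain cleanly in the non-co-parted case — verifying (a), (b), (c) from the tree-decomposition axioms, in particular controlling repeated adhesion vertices through (T3). Once this bookkeeping is in place, the path concatenations are essentially forced by the edge-partition property of adhesion-$\leq 1$ decompositions.
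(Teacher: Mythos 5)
Your proof is correct and takes essentially the same approach as the paper: both isolate the chain of cut-vertices arising from the adhesion sets along the $T$-path between $x$ and $y$, and glue the well-balanced orientations of the parts through those cut-vertices. The paper formalizes this as an induction on the $T$-distance $d$ between $t_x$ and $t_y$, invoking the identities $\lambda_G(x,y)=\min\{\lambda_G(x,x'),\lambda_G(x',y)\}$ and $\alambda_G(x,y)=\min\{\alambda_G(x,x'),\alambda_G(x',y)\}$ at a single cut-vertex $x'$ per step, which sidesteps most of the explicit chain bookkeeping (your conditions (a)--(c)) that you correctly identified as the main obstacle.
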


\begin{proof}
    For $t \in V(T)$ we write $G_t$ for the part $G[V_t]$.
    Let $x, y \in V(G)$ be two distinct vertices.
    We want to show that $\vv{G}$ is  well-balanced between $x$ and $y$.
    If $\lambda_G(x, y) = 0$, then this is trivially satisfied, so we may assume that $x$ and $y$ are in the same component of $G$.
    We choose $t_x, t_y \in V(T)$ such that $x \in V_{t_x}$ and $y \in V_{t_y}$ and the distance $d \in \N$ of $t_x$ and $t_y$ in $T$ is minimal.

    We prove that $\vv{G}$ is  well-balanced between $x$ and $y$ by induction on $d$.
    In the induction start we have $d = 0$ and thus $G_{t_x} = G_{t_y}$.
    Any $x-y$-path in $G$ is contained in $G_{t_x}$, since $(T, \V)$ is a tree-decomposition of adhesion $\leq 1$.
    Since $\vv{G_{t_x}}$ is  well-balanced, we are done.

    For the induction step let $d > 0$.
    Then in particular $t_x \neq t_y$ and the unique path
    from $t_x$ to $t_y$ in $T$ has length $d$.
    This path starts with an edge $t_x t \in E(T)$.
    Since $\lambda(x,y) > 0$, it follows from \cite[Lemma~12.3.1]{Diestel} that the corresponding adhesion set $V_{t_x} \cap V_t$ is non-empty and therefore contains a vertex $x' \in V_{t_x} \cap V_t$.
    
    By the minimality of $d$ we have $x \notin V_t$ and $y \notin V_{t_x}$, so $x' \neq x, y$.
    Since the adhesion sets have size $\leq 1$, the vertex $x'$ is a cutvertex separating $x$ from $y$ in $G$.
    Therefore,
 
  $$\lambda_G(x, y) = \minset{\lambda_G(x, x'), \lambda_G(x', y)} \; \text{ and }   \;    \alambda_G(x, y) = \minset{\alambda_G(x, x'), \alambda_G(x', y)}.$$

    By induction, $\vv{G}$ is  well-balanced between $x$ and $x'$ and  well-balanced between $x'$ and $y$.
    In the case, where both $\lambda_G(x, x')$ and $\lambda_G(x', y)$ are finite, we have
    \begin{align*}
        \alambda_G(x, y)
        &= \minset{\alambda_G(x, x'), \alambda_G(x', y)}\\
        &\geq \minset{\floor{\frac{\lambda_G(x, x')}{2}}, \floor{\frac{\lambda_G(x', y)}{2}}}\\
        &= \floor{\frac{\minset{\lambda_G(x, x'), \lambda_G(x', y)}}{2}} =  \floor{\frac{\lambda_G(x, y)}{2}}.
    \end{align*}
    If only one of them is infinite, we argue similarly, and if both are infinite, we have
    \[
        \alambda_G(x, y) = \minset{\alambda_G(x, x'), \alambda_G(x', y)} =  \minset{\lambda_G(x, x'), \lambda_G(x', y)} =  \lambda_G(x, y).\qedhere
    \]
\end{proof}

\section{Contractions in countable graphs}\label{section:contractions}

In our main result in \cref{section:main-theorem}, we will prove by induction that every countable, rayless graph has a well-balanced orientation.
We will apply the induction assumption to certain contracted subgraphs, and then lift any well-balanced orientation to the uncontracted graph.
This section will make this procedure formal.
Note that we will work throughout this section with a countable graph $G$, for which the definition of a well-balanced orientation $\vv{G}$ simplifies to  
\[
    \alambda_G(x, y) \geq \begin{cases}
        \floor{\frac{\lambda_G(x, y)}{2}} & \text{if } \lambda_G(x, y) < \infty,\\
        \infty  & \text{if } \lambda_G(x, y) = \infty.
    \end{cases}
\]

\subsection{Contractions of graphs preserving the edges}

\begin{definition}
    Let $\sim$ be an equivalence relation on a set $X$.
    If for some $x \in X$ the set $\singleset{x}$ is an equivalence class under $\sim$, we may write $\dot{x}$ for $\singleset{x}$.
    We say that the equivalence relation $\sim$ is \textbf{generated by} pairwise disjoint subsets $\family{X_i}{i \in I}$ of $X$ if for all $x, y \in X$
    \[
        x \sim y \iff x = y \text{ or } \exists i \in I \colon x, y \in X_i.
    \]
    If $X_i \neq \emptyset$ for $i \in I$, then $X_i$ is an equivalence class and hence an element of $X / {\sim}$.
    All other elements of $X / {\sim}$ are of the form $\dot{x}$ for $x \in X - \bigcup_{i \in I} X_i$.
    Hence, we have
    \[
        X / {\sim} = \set{X_i}{i \in I, X_i \neq \emptyset} \cup \set{\dot{x}}{x \in X - \bigcup_{i \in I} X_i}.
    \]
\end{definition}

\begin{definition}
    Let $G$ be a graph.
    We call an equivalence relation $\sim$ on $V(G)$ \textbf{contractible in $G$} if for all $x, y \in V(G)$, $x \sim y$ implies that $x$ and $y$ are independent.

    In this case we write $\pi \colon V(G) \to V(G) / {\sim}$ for the map that sends each element to its equivalence class.
    Let $f$ be the incidence map of $G$, then we define the \textbf{contraction of $G$ by $\sim$} as the following graph
    \begin{align*}
        V(G / {\sim}) &\coloneq V(G) / {\sim},\\
        E(G / {\sim}) &\coloneq E(G),
    \end{align*}
    together with the incidence map
    \[
        f^{\sim} \colon E(G / {\sim}) \to \pairs{V(G / {\sim})}, e \mapsto \pi(f(e)).
    \]
Since equivalent vertices are independent, edges do not get contracted to loops and so $f^{\sim}$ is well defined.

    We call a subset $N \subseteq V(G)$ \textbf{contractible in $G$} if it is a (possibly empty) set of independent vertices.
    We call $G / N \coloneq G / {\sim}$ the \textbf{contraction of $G$ by $N$}, where $\sim$ is the equivalence relation generated by $N$.
\end{definition}

    Intuitively $G / {\sim}$ is obtained from $G$ by contracting each equivalence class of $\sim$ to a single vertex and keeping all edges.     Also if $N \neq \emptyset$, then
    \[
        V(G / N) = \singleset{N} \cup \set{\dot{a}}{a \in V(G) - N}.
    \]

\begin{definition}
    Let $G$ be a graph, $\sim$ be contractible in $G$ and $H \subseteq G$ be a subgraph.
    Let $f$ be the incidence map of $G$.
    We write $\pi(H) \subseteq G / {\sim}$ for the subgraph defined by
    \begin{align*}
        V(\pi(H)) &\coloneq \pi(V(H))\\
        E(\pi(H)) &\coloneq E(H)\\
        f_{\pi(H)} &\coloneq f^{\sim}|_{E(H)}.
    \end{align*}
    We say that \textbf{$H$ induces $\pi(H) \subseteq G / {\sim}$}.
\end{definition}

\begin{definition}
    Let $G$ be a graph, $\sim$ be contractible in $G$ and $H \subseteq G / {\sim}$ be a subgraph.
    Let $f$ be the incidence map of $G$.
    We say that \textbf{$H$ lifts to $H'$}, if $H' \subseteq G$ is the following subgraph.
    \begin{align*}
        V(H') &= \bigcup f(E(H)) = \set{v \in V(G)}{\exists e \in E(H) \colon v \in f(e)}\\
        E(H') &= E(H)\\
        f_{H'} &= f|_{E(H)}.
    \end{align*}
\end{definition}

In other words, the vertex set of the lift is defined to be minimal with the property that it supports the lifted edges.

\begin{lemma}\label{lem:connected}
    Let $G$ be a graph, $\sim$ be contractible in $G$ and $H \subseteq G$ be a connected subgraph.
    Then $\pi(H)$ is connected as well.
\end{lemma}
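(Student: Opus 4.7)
The plan is to prove connectedness of $\pi(H)$ by directly exhibiting, for any two vertices of $\pi(H)$, a walk between them. Since $V(\pi(H)) = \pi(V(H))$, every vertex of $\pi(H)$ has the form $\pi(x)$ for some $x \in V(H)$. So fix $\pi(x), \pi(y) \in V(\pi(H))$ with $x,y \in V(H)$, and the task reduces to connecting these two vertices inside $\pi(H)$.

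Because $H$ is connected, there is a path $P = x_0 e_1 x_1 e_2 \dotsb e_n x_n$ in $H$ with $x_0 = x$ and $x_n = y$. The key observation is that $\pi(H)$ inherits the edge set of $H$ verbatim; only the incidence map changes via $\pi$. Thus for each edge $e_i \in E(H) = E(\pi(H))$ with $f(e_i) = \{x_{i-1}, x_i\}$, the corresponding edge in $\pi(H)$ satisfies $f^{\sim}(e_i) = \pi(f(e_i)) = \{\pi(x_{i-1}), \pi(x_i)\}$. Here we use that $\sim$ is contractible, so $x_{i-1}$ and $x_i$ are not identified (they are endpoints of an edge, hence not equivalent), ensuring $e_i$ is a genuine edge in $\pi(H)$ rather than a loop.

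Concatenating these edges yields a walk $\pi(x_0) e_1 \pi(x_1) e_2 \dotsb e_n \pi(x_n)$ in $\pi(H)$ from $\pi(x)$ to $\pi(y)$. Since the existence of a walk between any two vertices suffices for connectedness, this completes the argument.

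There is no real obstacle here; the only subtle point is remembering that the contractibility hypothesis on $\sim$ prevents edges from being collapsed to loops, so the walk above is actually well-defined inside $\pi(H)$. The lemma is essentially a sanity check that the contraction operation behaves well with respect to connectedness, and it will presumably be used later when arguing that lifts and projections of connected substructures remain connected.
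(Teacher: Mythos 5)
Your proof is correct and takes essentially the same approach as the paper: project a path in $H$ through $\pi$ to obtain a walk in $\pi(H)$, and note that a walk suffices for connectedness. The aside about contractibility preventing loops is true but is already baked into the definition of $G/{\sim}$ (so that $f^{\sim}$ is well-defined), rather than being a new ingredient needed for this lemma.
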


\begin{proof}
 This follows readily from the observation that if
$$P = x_0e_1x_1e_1\ldots e_kx_{k}$$
is a path in $G$ from $x_0$ to $x_k$, then 
$$\pi(P) = \pi(x_0)e_1\pi(x_1)e_1\ldots e_k \pi(x_{k})$$
 is a walk in $\pi(H)$ from $\pi(x_0)$ to $\pi(x_k)$.
\end{proof}

\subsection{Lifting well-balanced orientations from contractions}

 Whenever $\vv{G / {\sim}}$ is oriented, this orientation induces a ``compatible'' orientation on $G$ (and vice versa), since their edge sets are the same.
    Formally for all $e \in E(G)$ and $xy = f(e)$ we define $\vv{f}(e) \coloneq (x, y)$ if $\vv{f^{\sim}}(e) = (\pi(x), \pi(y))$.
    Therefore, the ``inducing'' and ``lifting'' of subgraphs from the previous definitions can also be performed for oriented $G$ and $G / {\sim}$ (if their orientations are ``compatible'' in this sense).

The following technical results isolate some connectivity properties that are preserved when lifting a well-balanced orientations from $G / {\sim}$ to $G$.

\begin{definition}
    Let $G$ be a  graph.
    We call a subset $K \subseteq \pairs{V(G)}$ a \textbf{skeleton in $G$} and write
    \[
        ||K|| \coloneq f^{-1}(K) = \bigcup_{xy \in K} E_G(x, y) \subseteq E(G)
    \]
    for the set of edges in $G$ spanned by the vertex pairs in $K$.
\end{definition}

\begin{lemma}\label{thm:1}
    Let $G$ be a countable graph, $K$ be a skeleton in $G$ and $\sim$ be contractible in $H \coloneq G - ||K||$ such that
    \begin{equation}\label{eq:K}
        \forall xy \in K \colon x \sim y \ \vee \ \lambda_H(x, y) = \infty.
    \end{equation}
    Then for all distinct vertices $x, y \in V(G)$ with $\pi(x) \neq \pi(y)$ we have
    \[
        \lambda_{H / {\sim}}(\pi(x), \pi(y)) \geq \lambda_G(x, y).
    \]
\end{lemma}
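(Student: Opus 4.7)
My plan is to prove this by a cut argument rather than by explicitly lifting edge-disjoint paths through the contraction. By the edge version of Menger's theorem (which holds for countable graphs), $\lambda_{H/\sim}(\pi(x),\pi(y))$ equals the minimum cardinality of a $\pi(x)$-$\pi(y)$-edge-cut in $H/\sim$. It therefore suffices to show that every finite edge-cut $F \subseteq E(H/\sim) = E(H)$ separating $\pi(x)$ from $\pi(y)$ satisfies $|F| \geq \lambda_G(x,y)$; the case $\lambda_G(x,y) = \infty$ will be handled simultaneously, since the same argument rules out the existence of any finite such $F$ under that hypothesis.

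Fix such a finite $F$, let $A' \subseteq V(H/\sim)$ be the vertex set of the component of $\pi(x)$ in $(H/\sim) - F$, and set $A \coloneq \pi^{-1}(A') \subseteq V(G)$ and $B \coloneq V(G) \setminus A$, so that $x \in A$ and $y \in B$. Then $E_G(A, B)$ is an $x$-$y$-edge-cut in $G$, and the technical heart of the argument is to show that $E_G(A, B) \subseteq F$; this yields $\lambda_G(x,y) \leq |E_G(A, B)| \leq |F|$ as desired. An edge $e \in E_G(A, B) \cap E(H)$ lies in $F$ immediately: after contraction it becomes an edge of $H/\sim$ with endpoints on opposite sides of the component $A'$ of $(H/\sim) - F$.

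The real work is the case $e \in E_G(A, B) \cap ||K||$ with endpoints $u \in A$, $v \in B$. Since $\pi(u) \in A'$ and $\pi(v) \notin A'$ force $\pi(u) \neq \pi(v)$, we have $u \not\sim v$, and hypothesis \eqref{eq:K} then forces $\lambda_H(u,v) = \infty$. But every $u$-$v$-path in $H$ must cross between $A$ and $B$, and thus use an edge of $E_G(A, B) \cap E(H) \subseteq F$; the finite set $F$ therefore bounds $\lambda_H(u,v) \leq |F| < \infty$, a contradiction. I expect this last step --- using the infinite edge-connectivity in $H$ to conclude that no $||K||$-edge can cross the cut --- to be the pivotal moment where hypothesis \eqref{eq:K} actually earns its keep; everything else is essentially bookkeeping about how cuts pull back along $\pi$.
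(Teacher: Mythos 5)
Your argument is correct, and it takes a genuinely different route from the paper's. The paper proves the inequality \emph{constructively}: given $l$ edge-disjoint $x$-$y$-paths in $G$, it recursively replaces each $||K||$-edge $e$ on a path either by collapsing it (if its ends are $\sim$-equivalent) or by rerouting through $H$ using $\lambda_H(a,b) = \infty$ to stay disjoint from a fixed finite set of forbidden edges, producing $l$ edge-disjoint $\pi(x)$-$\pi(y)$-paths in $H/\sim$ one at a time. You instead work on the \emph{dual} side: any finite $\pi(x)$-$\pi(y)$-edge-cut $F$ in $H/\sim$ pulls back to an $x$-$y$-edge-cut $E_G(A,B) \subseteq F$ in $G$, because no $||K||$-edge can cross the boundary (else $\lambda_H(u,v) = \infty$ would contradict $|F|$ finite), which is precisely where hypothesis \eqref{eq:K} does its work — this mirrors the case split in the paper's replacement step. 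Your cut computation is sound, including the observation that $A = \pi^{-1}(A')$ is a union of $\sim$-classes so the two sides of the cut are respected by $\pi$.

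The one thing you should flag explicitly is the appeal to ``edge-Menger for countable graphs.'' What you actually need is the weak (compactness) form: for finite $k$, $\lambda(x,y) \geq k$ holds iff every \emph{finite} $x$-$y$-edge-cut has size $\geq k$, and if no finite cut exists then $\lambda(x,y) = \aleph_0$ (by greedily peeling off paths, since after removing finitely many edges the absence of a finite cut persists). This is substantially more elementary than the Aharoni--Berger theorem, and it does hold for arbitrary infinite graphs, so your argument is not restricted to the countable case either. The trade-off is clear: the paper's forward construction is completely self-contained and avoids any form of Menger, while your cut argument is shorter and arguably more transparent about the role of \eqref{eq:K} but imports a standard exterior fact. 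Either is an acceptable proof of \cref{thm:1}.
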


\begin{proof}
    We start with the following claim, which we later use to recursively find replacements for paths.

    \begin{claim}
        Let $P'$ be a path in $G$.
        Let $x, y \in V(G)$ be its start- and endvertex respectively and let $L \subseteq E(H / {\sim})$ be some finite set of edges, edge-disjoint from $P'$.
        Then there exists a path $P$ from $\pi(x)$ to $\pi(y)$ in $H / {\sim}$ that is edge-disjoint from $L$.
    \end{claim}

    \begin{proof}[Proof of claim]\renewcommand{\qedsymbol}{$\Diamond$}
        We prove the claim by induction on the cardinality of $E(P') \cap ||K||$.
        In the induction start this intersection is empty, hence $E(P') \subseteq E(H)$.
        Therefore, $\pi(P') \subseteq H / {\sim}$ is connected by \cref{lem:connected}, so we let $P \subseteq \pi(P')$ be a path from $\pi(x)$ to $\pi(y)$ in $H / {\sim}$.
        Now $P$ is edge-disjoint from $L$, since $P'$ is.

        In the induction step we perform the following procedure to some edge $e \in E(P') \cap ||K||$.
        Let $a, b \in V(P')$ be the vertices incident with $e$ such that $a$ precedes $b$ along the order of $P'$.
        Since $e \in ||K||$, we have $ab \in K$ and therefore \cref{eq:K} implies $a \sim b$ or $\lambda_H(a, b) = \infty$.
        
        If $a \sim b$, then we apply the induction hypothesis to $P' a$ and $b P'$ individually (for the same $L$).
        The resulting paths will be from $\pi(x)$ to $\pi(a)$ and $\pi(b)$ to $\pi(y)$ in $H / {\sim}$ respectively.
        Since $a \sim b$ implies $\pi(a) = \pi(b)$, the union of these paths contains a path as desired.
        
        If on the other hand $\lambda_H(a, b) = \infty$, then we replace $e$ in $P'$ by a path $R$ from $a$ to $b$ in $H$ edge-disjoint from $L$.
        Then we apply the induction hypothesis to the resulting path.
    \end{proof}
    
    Let $x, y \in V(G)$ be two distinct vertices with $\pi(x) \neq \pi(y)$.
    Let $l \in \N$ with $l \leq \lambda_G(x, y)$.
    To show the assertion of the theorem it suffices to show that $\lambda_{H / {\sim}}(\pi(x), \pi(y)) \geq l$.

    Let $\PP' \coloneq \range{P'_1}{P'_l}$ be a set of pairwise edge-disjoint paths from $x$ to $y$ in $G$.
    We will recursively define paths $P_1, \dots, P_l$ from $\pi(x)$ to $\pi(y)$ in $H / {\sim}$, such that $P_i$ is edge-disjoint from all paths in $\singleset{P_1, \dots, P_{i - 1}} \cup (\PP' \setminus \singleset{P'_i})$ for $i = 1, \dots, l$.

    Assume that the paths $P_1, \dots, P_n$ are already defined for some $0 \leq n < l$.
    Consider the path $P'_{n + 1}$ from $x$ to $y$ in $G$, which is edge-disjoint from all paths in $\PP \coloneq \singleset{P_1, \dots, P_n} \cup (\PP' \setminus \singleset{P'_{n + 1}})$.\footnote{
        It is edge-disjoint from all paths in $\singleset{P_1, \dots, P_n}$, by the property of the paths $P_i$ for $i = 1, \dots, n$.
        It is also edge-disjoint from all paths in $\PP' \setminus \singleset{P'_{n + 1}}$, since the set $\PP'$ was chosen to be a set of pairwise edge-disjoint paths.
    }

    Using the claim, there exists a path $P_{n + 1}$ from $\pi(x)$ to $\pi(y)$ in $H / {\sim}$, wich is still edge-disjoint from all paths in $\PP$.
\end{proof}

\begin{lemma}\label{thm:2}
    Let $G$ be a countable graph, $K$ be a skeleton in $G$, $\sim$ be contractible in $H \coloneq G - ||K||$ and $\vv{H}$ be an orientation such that
    \begin{equation}\label{eq:sim}
        \forall xy \in \pairs{V(G)} \colon x \sim y \then (\alambda_H(x, y) = \infty \ \vee \ |E_G(x, y)| = \infty).
    \end{equation}
    Then there exists an orientation $\vv{G}$ such that $\vv{G} - ||K|| = \vv{H}$ and for all distinct vertices $x, y \in V(G)$ with $\pi(x) \neq \pi(y)$ we have
    \[
        \alambda_G(x, y) \geq \alambda_{H / {\sim}}(\pi(x), \pi(y)).
    \]
\end{lemma}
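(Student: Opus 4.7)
The plan is to construct the orientation $\vv{G}$ by extending $\vv{H}$ with a carefully chosen orientation of the edges of $||K||$, and then to lift arc-disjoint directed paths from $\vv{H/{\sim}}$ to $\vv{G}$ by inserting directed \emph{connectors} through equivalence classes of $\sim$. The extension will be designed so that, between any two distinct equivalent vertices, $\vv{G}$ has an infinite reservoir of arc-disjoint directed paths in both directions; this reservoir fuels the recursive construction of the lifts.

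To construct $\vv{G}$, I orient each edge of $||K||$ as follows. For every pair $\{a,b\}$ with $a \sim b$, contractibility of $\sim$ in $H$ forces $E_G(a,b) \subseteq ||K||$; if $|E_G(a,b)|$ is infinite I split this set into two infinite subsets and orient one towards $a$ and the other towards $b$, while if $|E_G(a,b)|$ is finite I orient those edges arbitrarily. All remaining edges of $||K||$ (those between non-equivalent endpoints) are oriented arbitrarily. The key observation is then: for any $a \neq b$ with $a \sim b$, there are infinitely many arc-disjoint directed $a$-to-$b$ paths and infinitely many arc-disjoint directed $b$-to-$a$ paths in $\vv{G}$. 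Indeed, by \eqref{eq:sim} (read on the unordered pair $\{a,b\}$ as asserting that both directional connectivities are infinite), either $\alambda_H(a,b) = \infty$ in both directions, in which case $\vv{H} \subseteq \vv{G}$ already supplies the families, or $|E_G(a,b)| = \infty$, in which case the bidirectional splitting above yields infinitely many length-one arc-disjoint directed paths in each direction.

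Now fix distinct $x, y \in V(G)$ with $\pi(x) \neq \pi(y)$, and enumerate a maximum family $\{P_n\}_{n \in I}$ of arc-disjoint directed $\pi(x)$-to-$\pi(y)$ paths in $\vv{H/{\sim}}$, where $I$ is either $\{1,\ldots,\ell\}$ for some $\ell \in \N$ or all of $\N$. I build lifts $Q_n \subseteq \vv{G}$ recursively. Writing $P_n = v_0 e_1 v_1 \ldots e_l v_l$ with $\vv{f}(e_i) = (a_i, b_i)$ so that $\pi(a_i) = v_{i-1}$ and $\pi(b_i) = v_i$, the lift consists of directed connectors $R_0$ from $x$ to $a_1$ within class $v_0$, connectors $R_i$ from $b_i$ to $a_{i+1}$ within class $v_i$ for $1 \leq i < l$, and $R_l$ from $b_l$ to $y$ within class $v_l$ (each $R_i$ is empty if its two endpoints coincide). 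Concatenating $R_0 e_1 R_1 \ldots e_l R_l$ yields a directed walk from $x$ to $y$ in $\vv{G}$, and any directed $x$-to-$y$ path contained in it serves as $Q_n$.

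The main, and essentially only, difficulty is the bookkeeping: one must ensure that the $Q_n$'s are pairwise arc-disjoint and that each constituent $R_i$ inside a single lift is arc-disjoint from everything chosen earlier. This works because at each stage of the recursion only finitely many arcs need be avoided -- those already used by $Q_1, \ldots, Q_{n-1}$, by the edges $e_1, \ldots, e_l$, and by the previously chosen $R_j$'s for the current $n$ -- while the key observation supplies an infinite reservoir of arc-disjoint directed paths between each connector's pair of endpoints. A single choice from this reservoir suffices to dodge the finite forbidden set, and the resulting arc-disjoint family $\{Q_n\}_{n \in I}$ witnesses $\alambda_G(x,y) \geq \alambda_{H/{\sim}}(\pi(x), \pi(y))$.
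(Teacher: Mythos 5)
Your overall strategy is the same as the paper's -- orient $||K||$ bidirectionally on infinite parallel classes so that every pair of $\sim$-equivalent vertices has infinitely many arc-disjoint directed paths in both directions, then lift directed paths from $\vv{H/{\sim}}$ to $\vv{G}$ by inserting directed connectors through equivalence classes. However, there is a genuine gap in your bookkeeping.

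At stage $n$ you avoid $Q_1,\dots,Q_{n-1}$, the edges of $P_n$, and the connectors already chosen for $Q_n$ -- but you do \emph{not} avoid the arcs of the future paths $P_m$ for $m > n$. The connectors $R_i$ live in $\vv{G}$ and may well pass through $\vv{H}$, whose edge set coincides with that of $\vv{H/{\sim}}$; so a connector inside $Q_n$ can use an arc belonging to some $P_m$ with $m>n$. That arc will also appear in $Q_m$, breaking arc-disjointness. Requiring $Q_n$ to be arc-disjoint from $Q_1,\dots,Q_{n-1}$ alone does not forestall this, because the collision is created by $Q_n$ first and only discovered at stage $m$.

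The obvious repair -- add $\bigcup_{m>n}E(P_m)$ to the avoidance set -- works when your index set $I$ is finite, but fails when $\alambda_{H/{\sim}}(\pi(x),\pi(y))$ is infinite and $I=\N$: the arcs to be dodged are then infinitely many, and the reservoir argument no longer produces a connector. The paper's proof sidesteps this precisely by never committing to the whole infinite family at once. It fixes an arbitrary finite $l\leq\alambda_{H/{\sim}}(\pi(x),\pi(y))$, takes only $P'_1,\dots,P'_l$, and then builds each $P_i$ edge-disjoint from $\{P_1,\dots,P_{i-1}\}\cup(\{P'_1,\dots,P'_l\}\setminus\{P'_i\})$, so that the forbidden arc set is always finite; the conclusion $\alambda_G(x,y)\geq l$ for every finite $l$ then gives the result including the infinite case. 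You should adopt this finitary framing and explicitly include the remaining $P_m$ (or $P'_m$) in your avoidance set.
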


\begin{proof}
    We need to orient all edges in $||K||$.
    For all $xy \in K$ that satisfy $|E_G(x, y)| = \infty$, we let $A \cupdot B = E_G(x, y)$ be a partition into two infinite sets.
    Then we orient the edges of $A$ from $x$ to $y$ and the edges of $B$ from $y$ to $x$.
    All other edges of $||K||$ can be oriented arbitrarily.
    Then \cref{eq:sim} implies
    \begin{equation}\label{eq:sim-inf}
        \forall xy \in \pairs{V(G)} \colon x \sim y \then \alambda_G(x, y) = \infty,
    \end{equation}
    as $\vv{H} \subseteq \vv{G}$ and $E_G(x, y) \neq \emptyset$ implies $xy \in K$ for all $xy \in \pairs{V(G)}$ with $x \sim y$, since $\sim$ is contractible in $H$.

    Let $x$ and $y$ be two distinct vertices of $G$ with $\pi(x) \neq \pi(y)$.
    Let $l \in \N$ with $l \leq \alambda_{H / {\sim}}(\pi(x), \pi(y))$.
    It suffices to show that $\alambda_G(x, y) \geq l$.
    Let $\PP' \coloneq \range{P'_1}{P'_l}$ be a set of pairwise edge-disjoint directed paths from $\pi(x)$ to $\pi(y)$ in $\vv{H / {\sim}}$.
    We will recursively define directed paths $P_1, \dots, P_l$ from $x$ to $y$ in $\vv{G}$, such that $P_i$ is edge-disjoint from all directed paths in $\singleset{P_1, \dots, P_{i - 1}} \cup (\PP' \setminus \singleset{P'_i})$ for $i = 1, \dots, l$.

    Assume that the directed paths $P_1, \dots, P_n$ are already defined for some $0 \leq n < l$.
    Consider the directed path $P'_{n + 1}$ from $\pi(x)$ to $\pi(y)$ in $H / {\sim}$, which is edge-disjoint from all directed paths in $\PP \coloneq \singleset{P_1, \dots, P_n} \cup (\PP' \setminus \singleset{P'_{n + 1}})$.

    We want to build a directed path $P_{n + 1}$ from $x$ to $y$ in $\vv{G}$, which is still edge-disjoint from all directed paths in $\PP$.
    Let $e_1, \dots, e_m \in E(P'_{n + 1})$ be the edges along $P'_{n + 1}$ enumerated in order from $\pi(x)$ to $\pi(y)$.
    Our plan is now to find directed paths in $\vv{G}$ that connect the vertex $x$ to the edge $e_1$ and $e_1$ to $e_2$ and so on, until connecting $e_m$ to $y$.

    For $i = 1, \dots, m$ we let $a_i, b_i \in V(G)$ be the vertices corresponding to the edge $e_i$ when considered in $\vv{G}$, i.e.\ $(a_i, b_i) = \vv{f}(e_i)$, were $\vv{f}$ is the incidence map of $\vv{G}$.
    We also define $b_0 \coloneq x$ and $a_{n + 1} \coloneq y$.
    By the choice of the enumeration of $e_1, \dots, e_m$, we have $b_i \sim a_{i + 1}$ for all $i \in [m]$.

    For all $i \in [m]$ we define a directed path $Q_i$ from $b_i$ to $a_{i + 1}$ in $\vv{G}$ as follows.
    If we have $b_i = a_{i + 1}$, then we simply let $Q_i$ be the trivial directed path containing just $b_i$.
    If on the other hand $b_i \neq a_{i + 1}$, then $b_i a_{i + 1} \in \pairs{V(G)}$ and since $|E(\bigcup \PP)|$ is finite, we can use \cref{eq:sim-inf}, to find a directed path $Q_i$ from $b_i$ to $a_{i + 1}$ in $\vv{G}$, which is edge-disjoint from all directed paths in $\PP$.

    In the end we consider $Q_0 \cup \singleset{e_1} \cup Q_1 \cup \dots \cup \singleset{e_m} \cup Q_m \subseteq \vv{G}$, which by construction contains a directed path $P_{n + 1}$ from $x$ to $y$ in $\vv{G}$.
    Then $P_{n + 1}$ is edge-disjoint from all directed paths in $\PP$, since all $Q_i$ for $i \in [m]$ are and the edges $e_i$ belong to $P'_{n + 1}$ for $i = 1, \dots, m$, which is edge-disjoint from all directed paths in $\PP$ as well.
\end{proof}

\begin{corollary}\label{cor:edge-decontraction}
    Let $G$ be a countable graph and $\sim$ be contractible in $G$.
    Assume there exists a well-balanced orientation $\vv{G / {\sim}}$.
    Assume further that
    \[
        \forall xy \in \pairs{V(G)} \colon x \sim y \then \alambda_G(x, y) = \infty.
    \]
    Then $\vv{G}$ is well-balanced.
\end{corollary}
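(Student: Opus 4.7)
The plan is to apply Lemmas~\ref{thm:1} and~\ref{thm:2} in tandem with the trivial skeleton $K = \emptyset$, so that $H \coloneq G - ||K|| = G$. Under this choice the assumption~\eqref{eq:K} of Lemma~\ref{thm:1} becomes vacuous, and the assumption~\eqref{eq:sim} of Lemma~\ref{thm:2} reduces precisely to the corollary's hypothesis $\alambda_G(x,y) = \infty$ whenever $x \sim y$ (noting that $\alambda_H = \alambda_G$). We take $\vv{H}$ in Lemma~\ref{thm:2} to be the orientation of $G$ induced by the given $\vv{G/{\sim}}$ via the compatibility of orientations of $G$ and $G/{\sim}$; since there are no edges in $||K||$, Lemma~\ref{thm:2} orients nothing further and its output coincides with this $\vv{G}$.

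With these inputs, Lemmas~\ref{thm:1} and~\ref{thm:2} together yield, for every pair of distinct vertices $x, y \in V(G)$ with $\pi(x) \neq \pi(y)$, the inequalities
\[
    \alambda_G(x,y) \geq \alambda_{G/{\sim}}(\pi(x), \pi(y)) \quad \text{and} \quad \lambda_{G/{\sim}}(\pi(x), \pi(y)) \geq \lambda_G(x,y).
\]
A short case distinction now concludes. If $x \sim y$, then $\pi(x) = \pi(y)$ and the corollary's hypothesis gives $\alambda_G(x,y) = \infty$ directly. Otherwise $\pi(x) \neq \pi(y)$; if $\lambda_G(x,y) < \infty$, then well-balancedness of $\vv{G/{\sim}}$ combined with monotonicity of the floor yields
\[
    \alambda_G(x,y) \geq \alambda_{G/{\sim}}(\pi(x), \pi(y)) \geq \floor{\frac{\lambda_{G/{\sim}}(\pi(x), \pi(y))}{2}} \geq \floor{\frac{\lambda_G(x,y)}{2}},
\]
and if $\lambda_G(x,y) = \infty$, then $\lambda_{G/{\sim}}(\pi(x), \pi(y)) = \infty$ forces $\alambda_{G/{\sim}}(\pi(x), \pi(y)) = \infty$, whence $\alambda_G(x,y) = \infty$ as required.

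The entire argument is bookkeeping on top of Lemmas~\ref{thm:1} and~\ref{thm:2}, which carry all the real content. The only point that needs a moment of care is confirming that the orientation $\vv{G}$ produced by Lemma~\ref{thm:2} with $K = \emptyset$ is the same object as the orientation of $G$ naturally induced by $\vv{G/{\sim}}$ in the statement of the corollary — a matter of unwinding the orientation compatibility between $G$ and $G/{\sim}$.
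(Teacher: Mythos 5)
Your proposal is correct and follows essentially the same route as the paper's own proof: apply \cref{thm:1} and \cref{thm:2} with the empty skeleton $K = \emptyset$ (so $H = G$), observe that the corollary's hypothesis discharges \eqref{eq:sim} since contractibility rules out $|E_G(x,y)| = \infty$ for $x \sim y$, and then run the same two-case analysis on whether $\lambda_G(x,y)$ is finite or infinite. The extra sentence confirming that the orientation produced by \cref{thm:2} with $K=\emptyset$ agrees with the orientation of $G$ induced by $\vv{G/{\sim}}$ is a correct and appropriate sanity check, but it is the same observation the paper makes implicitly when invoking the lemma ``with $K=\emptyset$ and the orientation on $G$ induced by $\vv{G/{\sim}}$.''
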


\begin{proof}
  Let $x$ and $y$ be two distinct vertices of $G$.
    We want to show that $\vv{G}$ is well-balanced between $x$ and $y$.
    If $\pi(x) = \pi(y)$, then $x \sim y$, so we are done by the assumption.

    So consider the case $\pi(x) \neq \pi(y)$. Applying \cref{thm:1} with $K = \emptyset$, we get for all distinct vertices $x, y \in V(G)$ with $\pi(x) \neq \pi(y)$
    \begin{equation}\label{eq:thm-1-applied}
        \lambda_{G / {\sim}}(\pi(x), \pi(y)) \geq \lambda_G(x, y).
    \end{equation}
Applying \cref{thm:2} again with $K = \emptyset$ and the orientation on $G$ induced by $\vv{G / {\sim}}$, we have for all distinct vertices $x, y \in V(G)$ with $\pi(x) \neq \pi(y)$
    \begin{equation}\label{eq:thm-2-applied}
        \alambda_G(x, y) \geq \alambda_{G / {\sim}}(\pi(x), \pi(y)).
    \end{equation}

Thus, if $\lambda_G(x, y) = \infty$, the \cref{eq:thm-1-applied} and \cref{eq:thm-2-applied} imply $\alambda_G(x, y) = \infty$, too. And if $\lambda_G(x, y) < \infty$, then $\alambda_G(x, y) < \infty$, so $\alambda_{G / {\sim}}(\pi(x), \pi(y)) < \infty$ by \cref{eq:thm-2-applied}, and therefore
    \begin{align*}
        \alambda_G(x, y)
        &\geq \alambda_{G / {\sim}}(\pi(x), \pi(y))\\
        &\geq \floor{\frac{\lambda_{G / {\sim}}(\pi(x), \pi(y))}{2}}\\
        &\geq \floor{\frac{\lambda_G(x, y)}{2}}
    \end{align*}
    where the last inequality follows from \cref{eq:thm-1-applied}.
\end{proof}

\begin{corollary}\label{cor:edge-deletion}
    Let $G$ be a countable graph, $K$ be a skeleton in $G$ and $H \coloneq G - ||K||$ be such that
    \[
        \forall xy \in K \colon \lambda_H(x, y) = \infty.
    \]
    If there exists a well-balanced orientation $\vv{H}$,
    then there exists a well-balanced orientation $\vv{G}$ such that $\vv{G} - ||K|| = \vv{H}$.
\end{corollary}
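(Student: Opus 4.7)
The plan is to apply \cref{thm:1} and \cref{thm:2} in tandem, taking $\sim$ to be the identity equivalence relation on $V(G)$ in which every vertex forms its own class. Under this choice the quotient $H/{\sim}$ coincides with $H$, the projection $\pi$ is a bijection, and the hypothesis of \cref{thm:1} requiring $x \sim y$ or $\lambda_H(x,y) = \infty$ for every $xy \in K$ reduces precisely to the assumption of the corollary. Similarly, the hypothesis of \cref{thm:2} requiring $\alambda_H(x,y) = \infty$ or $|E_G(x,y)| = \infty$ whenever $x \sim y$ is vacuously satisfied, since no two distinct vertices are equivalent. Consequently, \cref{thm:1} delivers the inequality $\lambda_H(x,y) \geq \lambda_G(x,y)$ for every pair of distinct vertices $x,y \in V(G)$, and \cref{thm:2}, applied to the given orientation $\vv{H}$, produces an orientation $\vv{G}$ extending $\vv{H}$ (so that $\vv{G} - ||K|| = \vv{H}$) with $\alambda_G(x,y) \geq \alambda_H(x,y)$ for every such pair.

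It then remains to chain these two inequalities with the well-balanced property of $\vv{H}$ to conclude that $\vv{G}$ is itself well-balanced. Fix distinct $x, y \in V(G)$. If $\lambda_G(x,y) = \infty$, then $\lambda_H(x,y) = \infty$ as well, whence $\alambda_H(x,y) = \infty$ and therefore $\alambda_G(x,y) = \infty$. If instead $\lambda_G(x,y) < \infty$, we split on whether $\lambda_H(x,y)$ is finite or infinite: in the infinite case $\alambda_G(x,y) = \infty$ by the argument just made, which comfortably exceeds $\floor{\lambda_G(x,y)/2}$; in the finite case, the well-balanced property of $\vv{H}$ yields $\alambda_G(x,y) \geq \alambda_H(x,y) \geq \floor{\lambda_H(x,y)/2} \geq \floor{\lambda_G(x,y)/2}$.

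The substance of this corollary is thus entirely packaged into \cref{thm:1} and \cref{thm:2}, and the argument above is routine bookkeeping. The only conceptually interesting point is the somewhat counterintuitive inequality $\lambda_H \geq \lambda_G$, which is valid despite $H$ being a subgraph of $G$ because the infinite $H$-edge-connectivity between the endpoints of each edge of $K$ lets one reroute any $G$-path using an edge of $||K||$ along a $||K||$-free detour through $H$; this is precisely the content of \cref{thm:1} specialized to the identity relation. I anticipate no genuine obstacle beyond ensuring that the chain of inequalities correctly handles the case where $\lambda_G(x,y)$ is infinite.
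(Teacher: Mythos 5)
Your proposal is correct and takes essentially the same approach as the paper's proof: both instantiate \cref{thm:1} and \cref{thm:2} with the trivial (identity) equivalence relation, identify $H/{\sim}$ with $H$, and chain the resulting inequalities with the well-balanced property of $\vv{H}$. The paper simply defers the final case analysis by referencing the previous corollary, whereas you carry it out explicitly.
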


\begin{proof}
    We apply \cref{thm:1} for the trivial\footnote{
        In the sense that $x \sim y \iff x = y$ for all $x, y \in V(G)$.
    } equivalence relation $\sim$ (which is always contractible) and get for all distinct vertices $x, y \in V(G)$
    \[
        \lambda_{H / {\sim}}(\dot{x}, \dot{y}) \geq \lambda_G(x, y).
    \]
    By \cref{thm:2} applied to the same equivalence relation $\sim$ there exists an orientation $\vv{G}$ such that $\vv{G} - ||K|| = \vv{H}$ and for all distinct vertices $x, y \in V(G)$
    \[
        \alambda_G(x, y) \geq \alambda_{H / {\sim}}(\dot{x}, \dot{y}).
    \]
    As $H / {\sim}$ is isomorphic to $H$ and $\vv{H}$ is well-balanced, the proof continues along the lines of the previous corollary.
\end{proof}

\begin{corollary}\label{cor:edge-deletion-decontraction}
    Let $G$ be a countable graph and $v, w \in V(G)$ be two distinct vertices such that
    \[
        |E_G(v, w)| = \infty.
    \]
    Consider $H \coloneq G - E_G(v, w)$. If there exists a well-balanced orientation $\vv{H / \singleset{v, w}}$, then there exists a well-balanced orientation $\vv{G}$ such that $\vv{G} - E_G(v, w) = \vv{H}$.
\end{corollary}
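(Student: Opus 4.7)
The plan is to recast the statement as a direct application of \cref{thm:1,thm:2} with the singleton skeleton $K \coloneq \singleset{\singleset{v,w}}$. With this choice $||K|| = E_G(v,w)$ and $G - ||K|| = H$, and since $\singleset{v,w}$ is an independent set in $H$ by construction, the equivalence relation $\sim$ that it generates is contractible in $H$; moreover $H / {\sim}$ coincides with $H / \singleset{v, w}$. I would let $\vv{H}$ denote the orientation of $H$ compatible with the given well-balanced $\vv{H / \singleset{v, w}}$, which is well-defined since both graphs share an edge set.

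Both hypotheses of the lifting lemmas then hold for free in this setup: for the unique pair $vw \in K$ we have $v \sim w$, so \cref{eq:K} is automatic; and the only non-trivial $\sim$-class is $\singleset{v,w}$, for which $|E_G(v,w)| = \infty$ by assumption, so \cref{eq:sim} also holds. Applying \cref{thm:1,thm:2} in turn then produces an orientation $\vv{G}$ with $\vv{G} - ||K|| = \vv{H}$ and, for all distinct $x, y \in V(G)$ with $\pi(x) \neq \pi(y)$, the two inequalities
\[
    \lambda_{H / {\sim}}(\pi(x), \pi(y)) \geq \lambda_G(x, y) \quad\text{and}\quad \alambda_G(x, y) \geq \alambda_{H / {\sim}}(\pi(x), \pi(y)).
\]

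To finish, I would verify that $\vv{G}$ is well-balanced by splitting into two cases. If $\pi(x) = \pi(y)$ then necessarily $\singleset{x,y} = \singleset{v,w}$; inspecting the construction in the proof of \cref{thm:2}, the edges of $E_G(v,w)$ are partitioned into two infinite sets oriented oppositely, which forces $\alambda_G(v,w) = \infty$, as required. In the remaining case $\pi(x) \neq \pi(y)$, combining the two displayed inequalities with the well-balancedness of $\vv{H / {\sim}}$ yields the well-balanced bound for $\alambda_G(x,y)$ in terms of $\lambda_G(x,y)$ via exactly the same short calculation used at the end of the proof of \cref{cor:edge-decontraction}.

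The main (and, I expect, only) subtlety is that the quantitative conclusions of \cref{thm:1,thm:2} apply only to pairs with $\pi(x) \neq \pi(y)$, so the contracted pair $\singleset{v,w}$ itself must be handled separately, by reading off the value of $\alambda_G(v,w)$ from the explicit orientation of $E_G(v,w)$ prescribed inside the proof of \cref{thm:2}. Once that observation is made, the proof reduces to routine bookkeeping.
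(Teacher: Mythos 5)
Your proof is correct and takes essentially the same approach as the paper, which merely says ``these properties again allow us to apply \cref{thm:1} and \cref{thm:2} similar to the previous two proofs.'' You also correctly flag the one subtlety the paper's terse wording leaves implicit: the contracted pair $\singleset{v,w}$ is not covered by the $\pi(x) \neq \pi(y)$ inequalities, and $\alambda_G(v,w) = \infty$ must instead be read off from the half-and-half orientation of $E_G(v,w)$ prescribed inside the proof of \cref{thm:2}.
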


\begin{proof}
    We let $K \coloneq \singleset{vw}$ be a skeleton in $G$.
    Then $||K|| = E_G(v, w)$.
    We also define $\sim$ to be the equivalence relation on $V(H)$ generated by $\singleset{v, w}$.
    Then we have
    \[
        \forall xy \in \pairs{V(G)} \colon x \sim y \then |E_G(x, y)| = \infty.
    \]
    These properties again allow us to apply \cref{thm:1} and \cref{thm:2} similar to the previous two proofs.
\end{proof}

The following lemma forms the induction start in the proof of \cref{thm:rayless-intro}.

\begin{lemma}\label{lem:induction-start}
    Any countable graph with finitely many vertices only (but potentially infinitely many edges) has a well-balanced orientation.
\end{lemma}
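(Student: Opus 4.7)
The plan is to induct on the number of vertices $n$ of $G$, using the classical finite Nash--Williams theorem for the case with no infinite edge-multiplicities, and using \cref{cor:edge-deletion-decontraction} to peel off an infinite-multiplicity pair otherwise.

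For the base case $n \leq 1$, the well-balancedness condition is vacuous since there are no two distinct vertices. For the inductive step, suppose the statement holds for all countable graphs on at most $n$ vertices, and let $G$ be a countable graph on $n+1$ vertices. I distinguish two cases.

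First, if $|E_G(v,w)| < \infty$ for every pair of distinct vertices $v,w \in V(G)$, then since $V(G)$ is finite, the total number of edges of $G$ is also finite. Hence $G$ is a finite multigraph, and so the classical strong orientation theorem of Nash--Williams \cite{NashWilliams1960} directly supplies a well-balanced orientation of $G$.

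Otherwise, there exist two distinct vertices $v, w \in V(G)$ with $|E_G(v, w)| = \infty$. Consider $H \coloneq G - E_G(v,w)$, which is again countable. The contracted graph $H / \{v, w\}$ has exactly $n$ vertices (since $\{v, w\}$ becomes a single vertex and all other vertices remain singletons) and is still countable, as it has the same edges as $H$. By the induction hypothesis, $H / \{v,w\}$ admits a well-balanced orientation. Now \cref{cor:edge-deletion-decontraction} directly applies, yielding a well-balanced orientation $\vv{G}$ of $G$ extending the orientation on $H$ obtained from lifting $\vv{H/\{v,w\}}$.

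There is no real obstacle in this argument: all the heavy lifting has already been done in Section~\ref{section:contractions}, where \cref{cor:edge-deletion-decontraction} was set up precisely to handle the decontraction of a pair of vertices joined by infinitely many parallel edges. The role of this lemma in the paper is simply to bridge between the classical finite Nash--Williams theorem and the inductive machinery for countable graphs, by taking care of the technicality that a graph on finitely many vertices may still have infinitely many edges.
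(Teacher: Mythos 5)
Your proof is correct and follows essentially the same route as the paper's: induction on $|V(G)|$, using Nash--Williams' theorem when all edge multiplicities (hence the whole graph) are finite, and \cref{cor:edge-deletion-decontraction} to peel off an infinite-multiplicity pair otherwise. The only cosmetic difference is the order in which you present the two cases.
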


\begin{proof}
    We prove the lemma by induction on $|V(G)| \in \N$.
    For $|V(G)| \leq 1$ the claim is trivial.
    In the induction step we have $|V(G)| \geq 2$.

    If there exists $xy \in \pairs{V(G)}$ such that $E_G(x, y)$ is infinite, we apply the induction hypothesis to $(G - E_G(x, y)) / \xy$.
    By \cref{cor:edge-deletion-decontraction} we are done.

    Otherwise the sets $E_G(x, y)$ are finite for all $xy \in \pairs{V(G)}$.
    But then $G$ is finite, so the result follows from Nash-Williams' strong orientation theorem \cite{NashWilliams1960}.
\end{proof}

\subsection{On the simple graph \texorpdfstring{$L(\protect\vv{G}, N)$}{L(G, N)}}

In this section we investigate the following simple graph.

\begin{definition}
    Let $\vv{G}$ be an oriented graph and let $N \subseteq V(G)$.
    We define a simple graph $L(\vv{G}, N)$ as follows
    \begin{align*}
        V(L(\vv{G}, N)) &\coloneq N\\
        E(L(\vv{G}, N)) &\coloneq \set{xy \in \pairs{N}}{\alambda_G(x, y) \geq 1 \text{ or } \alambda_G(y, x) \geq 1}.
    \end{align*}
\end{definition}

The following lemma considers a special case of this notion.

\begin{lemma}\label{lem:alambda-or}
    Let $G$ be a connected graph.
    Let $x, y \in V(G)$ be two distinct and independent vertices.
    Assume there exists a well-balanced orientation $\vv{G / \xy}$.
    Then
    \[
        \alambda_G(x, y) \geq 1 \qquad \text{or} \qquad \alambda_G(y, x) \geq 1.
    \]
\end{lemma}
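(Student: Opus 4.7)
I argue by contradiction: assume neither $\alambda_G(x, y) \geq 1$ nor $\alambda_G(y, x) \geq 1$. Since $G$ is connected and $x \neq y$ are independent, pick an $x$-$y$ path $P = x v_1 v_2 \cdots v_{k-1} y$ in $G$ with $k \geq 2$, whose interior vertices lie in $V(G) \setminus \xy$. Write $N = \xy$ for the merged vertex in $G/\xy$.

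For each interior vertex $v_i$ of $P$, the two halves of $P$ yield two edge-disjoint $N$-$v_i$ paths in $G/\xy$, so $\lambda_{G/\xy}(N, v_i) \geq 2$; well-balancedness of $\vv{G/\xy}$ then gives $\alambda_{G/\xy}(N, v_i) \geq 1$ and $\alambda_{G/\xy}(v_i, N) \geq 1$. The key lifting observation is that a directed $N$-$v_i$ path in the quotient visits $N$ only as its startpoint, so it lifts to a directed path in $\vv{G}$ from some $a \in \xy$ to $v_i$ that avoids the other vertex of $\xy$ entirely; the analogous statement holds for directed $v_i$-$N$ paths.

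Under the contradiction hypothesis, a directed $x$-$v_i$ path avoiding $y$ and a directed $v_i$-$y$ path avoiding $x$ cannot coexist, since their concatenation is a directed walk visiting $x$ only at the start and $y$ only at the end, and so contains a directed $x$-$y$ path; the symmetric configuration is likewise forbidden. Combined with the existence statements, each $v_i$ must lie in exactly one of two classes: $V_x$ (admitting both an $x$-$v_i$ and a $v_i$-$x$ directed path in $\vv{G}$, each avoiding $y$) or $V_y$ (defined symmetrically, avoiding $x$). Moreover $V_x \cap V_y = \emptyset$, since a common vertex would give a $y$-$v$-$x$ directed walk.

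The endpoints of $P$ force $v_1 \in V_x$ and $v_{k-1} \in V_y$: if instead $v_1 \in V_y$, then the edge $xv_1$, oriented either way in $\vv{G}$, combines with one of the two witnessing $V_y$-paths for $v_1$ to yield a directed $x$-$y$ or $y$-$x$ walk, contradicting our assumption; likewise for $v_{k-1}$. If $k = 2$ then $v_1 = v_{k-1}$ lies in $V_x \cap V_y = \emptyset$, a contradiction. Otherwise, since $V_x$ and $V_y$ partition the interior of $P$, there is some consecutive pair with $v_i \in V_x, v_{i+1} \in V_y$, and either orientation of the edge $v_i v_{i+1}$ concatenates the corresponding $V_x$- and $V_y$-paths into a directed $x$-$y$ or $y$-$x$ walk in $\vv{G}$, completing the contradiction. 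The main subtlety I expect is the disciplined tracking of which vertex of $\xy$ each witnessing path avoids, so that all concatenated walks meet $x$ only at the start and $y$ only at the end and thus properly contain a directed $x$-$y$ or $y$-$x$ path.
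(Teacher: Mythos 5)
Your proof is correct and relies on the same core ingredients as the paper's: the cycle $P$ induces in $G/\xy$ gives $\lambda_{G/\xy} \geq 2$ between $N$ and interior vertices of $P$, well-balancedness of $\vv{G/\xy}$ then produces directed quotient paths, and lifting those gives directed paths in $\vv{G}$ meeting $\{x,y\}$ exactly once, at an endpoint. The packaging, though, is genuinely different. The paper argues directly: it defines $X = \{v \in V(G-x) : \alambda_G(x,v) \geq 1 \text{ and } \alambda_G(v,x) \geq 1\}$, takes the last vertex $x'$ of $P$ lying in $X \cup \{x\}$ together with its successor $u$, uses a global WLOG orientation flip to direct the edge $x'u$ from $x'$ to $u$, and then applies the quotient-connectivity argument only at the single vertex $u$. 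You argue by contradiction, classify every interior vertex of $P$ into $V_x$ or $V_y$ (which requires both directed quotient paths $N \to v_i$ and $v_i \to N$ for each such vertex), pin down $v_1 \in V_x$ and $v_{k-1} \in V_y$, and locate the transition edge by pigeonhole. Your contradiction hypothesis is what makes the ``avoiding the other of $x,y$'' conditions in your definitions of $V_x, V_y$ come for free, whereas the paper's direct argument never needs those restrictions; in exchange the paper gets by with a single transition vertex rather than a full classification. Both are sound; the paper's route is somewhat leaner, yours avoids the orientation flip and keeps the roles of $x$ and $y$ visibly symmetric throughout.
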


\begin{proof}
    For the proof we will write $G^* \coloneq G / \xy$ and $*$ for the vertex of $G^*$ corresponding to the equivalence class $\xy$.
    We also define
    \[
        X \coloneq \set{v \in V(G - x)}{\alambda_G(x, v) \geq 1 \text{ and } \alambda_G(v, x) \geq 1}.
    \]

    Since $G$ is connected, there exists a path $P$ from $x$ to $y$ in $G$.
    Let $x'$ be the last vertex along $P$ that is in $X \cup \singleset{x}$ ($x$ is a candidate).
    If $y \in X$ we are done, so we may assume that $y \notin X$.
    Now $y \notin X \cup \singleset{x}$ implies $x' \neq y$, so we can choose $u$ to be the first vertex along $P$ after $x'$ (in particular $u \notin X \cup \singleset{x}$).

    Let $e$ be the edge of $P$ between $x'$ and $u$.
    We may assume that $\vv{f}(e) = (x', u)$, as flipping the orientation\footnote{
        I.e.\ reversing the orientation of each edge.
    } $\vv{G / \xy}$ will still be well-balanced and the definition of $X$ and the claim of the lemma are symmetric.
    In particular $\alambda_G(x', u) \geq 1$.

    \begin{claim}
        We have $\alambda_G(x, u) \geq 1$.
    \end{claim}

    \begin{proof}[Proof of claim]\renewcommand{\qedsymbol}{$\Diamond$}
        If $x' = x$, then $\alambda_G(x, u) = \alambda_G(x', u) \geq 1$.
        In the case $x' \in X$ we argue similarly
        \[
            \alambda_G(x, u) \geq \minset{\alambda_G(x, x'), \alambda_G(x', u)} \geq \minset{1, 1} = 1,
        \]
        where $\alambda_G(x, x') \geq 1$ comes from the definition of $X$.
    \end{proof}

    If $u = y$, then we are immediately done using the claim.
    Therefore, we may assume that $u \neq y$.
    Now we have both $u \neq x$ (since $u \notin X \cup \singleset{x}$) and $u \neq y$, which implies $\dot{u} \neq *$ in $G^*$.
    Hence, the cycle in $G^*$ that is induced by $P$ (and therefore in particular contains both $\dot{u}$ and $*$), shows $\lambda_{G^*}(\dot{u}, *) \geq 2$.
    Since the orientation $\vv{G^*}$ is well-balanced, we get $\alambda_{G^*}(\dot{u}, *) \geq 1$.
    Let $Q$ be a directed path from $\dot{u}$ to $*$ in $\vv{G^*}$.

    \begin{claim}
        The lift of $Q$ is a directed path from $u$ to $y$ in $\vv{G}$.
    \end{claim}

    \begin{proof}[Proof of claim]\renewcommand{\qedsymbol}{$\Diamond$}
        First note that the lift of $Q$ will be a directed path (from $u$ to $y$ or from $u$ to $x$), as there is only one edge of $Q$ adjacent to $* = \xy$, which is the only non trivial equivalence class.
        Suppose for a contradiction that the lift of $Q$ is a directed path from $u$ to $x$ in $\vv{G}$.
        Then the existence of this lift shows $\alambda_G(u, x) \geq 1$ and since $\alambda_G(x, u) \geq 1$ as well (by the first claim), we have $u \in X$.
        But this contradicts $u \notin X \cup \singleset{x}$.
    \end{proof}

    The second claim shows $\alambda_G(u, y) \geq 1$.
    Together with the first claim we are done.
\end{proof}

\begin{theorem}\label{thm:L-connected}
    Let $G$ be a connected graph and let $N$ be contractible and non-empty in $G$.
    Assume there exists a well-balanced orientation $\vv{G / N}$.
    Then $L(\vv{G}, N)$ is connected.
\end{theorem}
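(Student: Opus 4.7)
My plan is to proceed by contradiction: suppose $L \coloneq L(\vv{G}, N)$ is disconnected. Then $|N| \geq 2$ and we can partition $N = A \cupdot B$ into non-empty sets with no $L$-edge between them, i.e.\ $\alambda_G(a, b) = 0 = \alambda_G(b, a)$ for all $a \in A$ and $b \in B$.

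First I would observe that $G$ has no edge between $A$ and $B$: any such edge would be oriented in $\vv{G}$ either as $a \to b$ or $b \to a$, yielding $\alambda_G(a, b) \geq 1$ or $\alambda_G(b, a) \geq 1$, contradicting the $L$-disconnection. Since $A, B \subseteq N$ are independent (as $N$ is) and $G$ has no $A$-$B$-edges, the equivalence relation $\sim$ on $V(G)$ generated by the two classes $A$ and $B$ is contractible in $G$. Let $G' \coloneq G/{\sim}$, with distinguished vertices $a^* = A$ and $b^* = B$. By \cref{lem:connected}, $G'$ is connected, and the observation above forces $a^*$ and $b^*$ to be independent in $G'$. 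Moreover, $G'/\{a^*, b^*\}$ is canonically the same graph as $G/N$ (both contract $N$ to a single vertex), so the assumed well-balanced orientation $\vv{G/N}$ transfers verbatim to a well-balanced orientation $\vv{G'/\{a^*, b^*\}}$.

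I can now apply \cref{lem:alambda-or} to the connected graph $G'$ with the independent pair $\{a^*, b^*\}$, obtaining $\alambda_{G'}(a^*, b^*) \geq 1$ or $\alambda_{G'}(b^*, a^*) \geq 1$. By symmetry assume the former, and let $Q$ be a directed $a^*$-$b^*$-path in $\vv{G'}$. Because $G$ has no $A$-$B$-edges, the first edge of $Q$ lifts in $G$ to a directed edge from some $a \in A$ to a vertex outside $N$, the last edge lifts to a directed edge from a vertex outside $N$ to some $b \in B$, and the intermediate edges lift verbatim between vertices of $V(G) \setminus N$. Concatenating along $Q$ yields a directed $a$-$b$-path in $\vv{G}$, so $\alambda_G(a, b) \geq 1$, whence $ab \in E(L)$; contradiction.

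The routine parts are just bookkeeping: verifying that $\sim$ is contractible, that the induced orientation on $G'/\{a^*, b^*\}$ really is $\vv{G/N}$ under the identification, and that the lift of $Q$ is a genuine directed path (rather than only a walk). The conceptual core is the reduction from arbitrary $N$ to the two-vertex case already handled by \cref{lem:alambda-or}, achieved by simultaneously contracting each hypothetical side $A, B$ of a disconnection of $L$ into a single vertex.
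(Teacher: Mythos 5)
Your proposal is correct and follows essentially the same route as the paper's proof: contract each side of a hypothetical disconnection of $L(\vv{G},N)$ into a single vertex, check that the contracted graph and its two distinguished vertices satisfy the hypotheses of \cref{lem:alambda-or} (with $G'/\{a^*,b^*\}\cong G/N$ inheriting the given well-balanced orientation), and lift the resulting directed path back to $\vv{G}$ to produce an $L$-edge across the cut, a contradiction. The only cosmetic difference is that the paper takes one side to be a component of $L$ and the other its complement, while you take an arbitrary partition with no $L$-edges across; you are also slightly more explicit about verifying independence of $a^*,b^*$ and about why the lifted walk is a genuine directed path, details the paper leaves implicit.
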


\begin{proof}
    The simple graph $L(\vv{G}, N)$ is non-empty, since $N \neq \emptyset$, so we can let $X \subseteq N$ be the vertices of a component of $L(\vv{G}, N)$.
    Suppose for a contradiction that its complement $Y \coloneq N \setminus X$ is non-empty.

    Consider the equivalence relation $\sim$ on $V(G)$ generated by $X$ and $Y$ and the graph $H \coloneq G / {\sim}$, which is connected by \cref{lem:connected}, since $G$ is connected.
    As $X$ and $Y$ are equivalence classes under $\sim$, we have $X, Y \in V(H)$.
    Note that $H / \singleset{X, Y}$ is (isomorphic to) $G / N$, for which there exists a well-balanced orientation by assumption.
    Therefore, we can apply \cref{lem:alambda-or} (to $H$ and $X, Y \in V(H)$), to see that $\alambda_{H}(X, Y) \geq 1$ or $\alambda_{H}(Y, X) \geq 1$.
    We may assume that $\alambda_{H}(X, Y) \geq 1$.
    Let $P$ be a directed path from $X$ to $Y$ in $\vv{H}$.
    Then the lift of $P$ gives a directed path from a vertex in $X$ to a vertex in $Y$ in $\vv{G}$.
    This contradicts the maximality of $X$.
\end{proof}

\section{Well-balanced orientations for countable rayless graphs }
\label{section:main-theorem}

Finally we turn our attention to rayless graphs. Our main result in this section is that every countable rayless graph has a well-balanced orientation.

\subsection{The order of a rayless graph}
\label{section:rayless}

We follow \cite{Halin1998} to recall the definitions and properties related to rayless graphs.
Note that although Halin restricts to simple graphs, everything can be stated for graphs with parallel edges without problems.

\begin{definition}[{Schmidt 1982, \cite{Schmidt1982EinRF}, \cite[Definition~3.1]{Halin1998}}]
    For all ordinals $\alpha$, let us recursively define a class $A(\alpha)$ of graphs as follows:
    \begin{enumerate}
        \item
            The class $A(0)$ consists of all graphs with finitely many vertices (the set of edges may be infinite).
        \item
            If $\alpha > 0$ and $A(\beta)$ has been defined for every $\beta < \alpha$, then a graph $G$ is defined to be in $A(\alpha)$ if and only if there exists a finite $U \subseteq V(G)$ such that every component of $G - U$ is in some $A(\beta)$ with $\beta < \alpha$.
    \end{enumerate}

    Let $A$ denote the union of the classes $A(\alpha)$.
    Then for every $G \in A$ there exists a smallest $\alpha$ with $G \in A(\alpha)$.
    This $\alpha$ is called the \textbf{order} of $G$ and denoted by $o(G)$.

    If $o(G) > 0$ there is a finite $U \subseteq V(G)$ such that $o(C) < o(G)$ for every $C \in \C(G - U)$.
    Every such $U$ is called \textbf{reducing} in $G$.
\end{definition}

\begin{theorem}[{Schmidt 1982, \cite{Schmidt1982EinRF}}]
    The class $A$ is the class of all rayless graphs.
\end{theorem}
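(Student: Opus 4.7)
The theorem requires two directions. For the easier one --- every $G \in A$ is rayless --- I would proceed by transfinite induction on $o(G)$. The base case $o(G) = 0$ is immediate, since a graph with only finitely many vertices cannot contain a ray. For the inductive step, let $\alpha = o(G) > 0$ and choose a reducing set $U \subseteq V(G)$. If $R$ were a ray in $G$, then since $U$ is finite, all but finitely many vertices of $R$ lie outside $U$; hence a tail of $R$ is contained in a single component $C$ of $G - U$. But $o(C) < \alpha$ by the choice of $U$, so $C$ is rayless by the inductive hypothesis --- contradiction.

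For the converse direction I would prove the contrapositive: if $G \notin A$, then $G$ contains a ray. The key preliminary is that if $G \notin A$, then for \emph{every} finite $U \subseteq V(G)$, at least one component of $G - U$ fails to be in $A$. Indeed, if every component $C \in \C(G - U)$ were in $A$, then setting $\alpha \coloneq \sup \set{o(C)}{C \in \C(G - U)} + 1$ would exhibit $G \in A(\alpha)$ via $U$, contradicting $G \notin A$. Using this, I would recursively construct a ray $v_0 v_1 v_2 \cdots$ in $G$. Applying the observation with $U = \emptyset$ yields a component $C_0$ of $G$ with $C_0 \notin A$; pick any vertex $v_0 \in V(C_0)$. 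Inductively, given a connected subgraph $C_i$ with $C_i \notin A$ and a vertex $v_i \in V(C_i)$, apply the observation within $C_i$ to $\{v_i\}$ to obtain a component $C_{i+1}$ of $C_i - \{v_i\}$ with $C_{i+1} \notin A$; since $C_i$ is connected and $v_i \in V(C_i)$, the vertex $v_i$ has a neighbor in $C_{i+1}$, which we choose as $v_{i+1}$. By the nested inclusions $V(C_{i+1}) \subseteq V(C_i) - \{v_i\} \subseteq \cdots \subseteq V(C_0) - \{v_0\}$, the new vertex $v_{i+1}$ is distinct from all previous $v_j$ and is adjacent to $v_i$, so $v_0 v_1 v_2 \cdots$ is a ray in $G$.

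The argument is mostly careful bookkeeping; the one subtle point is the preliminary observation, where one must take a supremum of the ordinals $o(C)$ over a (potentially infinite) set of components. This is fine since these ordinals form a set and hence admit a supremum, but it is the step where the definitional machinery of $A(\alpha)$ --- in particular, that the bounding ordinal $\beta$ may depend on the component --- is actually used.
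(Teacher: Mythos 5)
Your proof is correct and follows the standard Schmidt--Halin argument that the paper defers to (the paper cites Halin's proof of \cite[Proposition~3.2]{Halin1998} without reproducing it): transfinite induction on $o(G)$ for the direction $A \Rightarrow$ rayless, and a recursive escape through a nested sequence of components $C_0 \supsetneq C_1 \supsetneq \cdots$ outside $A$ to build a ray for the contrapositive of the converse. The preliminary observation you flag --- that $G \notin A$ forces some component of $G - U$ outside $A$ for every finite $U$, via a supremum of the $o(C)$ --- is exactly the right lemma and is argued soundly.
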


\begin{proof}
    If $G$ contains a ray, then $G$ must have infinitely many vertices, so $G \notin A(0)$ and therefore $o(G) > 0$.
    The remaining proof is exactly the same as Halin's proof of \cite[Proposition~3.2]{Halin1998} and multiple edges make no difference.
\end{proof}

The proofs of the following statements are also exactly the same and therefore will not be repeated here.

\begin{theorem}[{Schmidt 1982, \cite{Schmidt1982EinRF}, \cite[Proposition~3.3]{Halin1998}}]\label{thm:halin-subset}
    If $G \in A$ and $H \subseteq G$, then $H \in A$ and $o(H) \leq o(G)$.
\end{theorem}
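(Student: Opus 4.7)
The plan is to proceed by transfinite induction on $o(G)$, proving the stronger statement that $H \in A(o(G))$ for every subgraph $H \subseteq G$; this immediately gives both $H \in A$ and $o(H) \leq o(G)$.

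For the base case $o(G) = 0$, the graph $G$ has only finitely many vertices, so any subgraph $H$ has at most as many vertices and therefore lies in $A(0)$.

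For the inductive step, suppose $o(G) = \alpha > 0$ and that the claim holds for all graphs of order $<\alpha$. Pick a finite reducing set $U \subseteq V(G)$ witnessing $G \in A(\alpha)$, so that every component $C$ of $G - U$ satisfies $o(C) < \alpha$. Given a subgraph $H \subseteq G$, I would set $U' \coloneq U \cap V(H)$, which is finite, and observe that $V(H) \setminus U' = V(H) \setminus U$. Consequently every component $C'$ of $H - U'$ has vertex set contained in $V(G) \setminus U$, and since $C'$ is connected in $G - U$ as well, it is a subgraph of a unique component $C$ of $G - U$. The induction hypothesis applied to $C' \subseteq C$ yields $o(C') \leq o(C) < \alpha$, so $U'$ witnesses $H \in A(\alpha)$.

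There is no real obstacle here; the only minor point that needs a moment of care is verifying that removing $U' = U \cap V(H)$ from $H$ gives the same vertex set as removing $U$ from $H$, and that each component of $H - U'$ genuinely embeds into a single component of $G - U$ (which follows because connectedness in $H - U'$ implies connectedness in $G - U$). Everything else is bookkeeping with the recursive definition of $A(\alpha)$.
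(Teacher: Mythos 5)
Your proof is correct and follows the standard transfinite induction argument (essentially Halin's proof of Proposition 3.3, which the paper cites and declines to repeat). The reduction to $U' = U \cap V(H)$, the observation that $H - U' = H - U$, and the step embedding each component of $H - U'$ into a component of $G - U$ so that the induction hypothesis applies are all exactly the expected bookkeeping.
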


\begin{theorem}[{Schmidt 1982, \cite{Schmidt1982EinRF}, \cite[Proposition~3.5]{Halin1998}}]
    For a rayless graph $G$ with infinite vertex set and an ordinal $\alpha$ the following statements are equivalent.
    \begin{itemize}
        \item
            We have $o(G) > \alpha$.
        \item
            If $U$ is reducing in $G$, then $G - U$ has infinitely many components of order $\geq \alpha$.
        \item
            There are infinitely many disjoint connected subgraphs of $G$ with order $\alpha$.
    \end{itemize}
\end{theorem}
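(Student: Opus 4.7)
The plan is to establish the equivalence via the implication cycle (3) $\Rightarrow$ (1) $\Rightarrow$ (2) $\Rightarrow$ (3), relying on the definition of $o(G)$ and on \cref{thm:halin-subset} throughout. For (3) $\Rightarrow$ (1) I would argue by contrapositive: if $o(G) \leq \alpha$, then $G \in A(\alpha)$; the case $\alpha = 0$ contradicts $V(G)$ being infinite, and for $\alpha > 0$ a finite witnessing set $U$ is met by only finitely many of the pairwise disjoint subgraphs $H_1, H_2, \ldots$, so some $H_i$ lies inside a single component $C$ of $G - U$ and \cref{thm:halin-subset} yields $\alpha = o(H_i) \leq o(C) < \alpha$, a contradiction.

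For (1) $\Rightarrow$ (2) I would assume $o(G) > \alpha$, fix a reducing $U$, and suppose for contradiction that only finitely many components $C_1, \dots, C_k$ of $G - U$ have order $\geq \alpha$; set $\gamma_i := o(C_i) < o(G)$. For each $i$ pick a finite $U_i \subseteq V(C_i)$ witnessing $C_i \in A(\gamma_i)$ (or $U_i := V(C_i)$ if $\gamma_i = 0$). Then $U' := U \cup \bigcup_i U_i$ is finite, and every component of $G - U'$ has order $< \delta := \max\{\alpha, \gamma_1, \dots, \gamma_k\}$. Since a finite maximum of ordinals strictly below $o(G)$ is itself strictly below $o(G)$, this gives $G \in A(\delta)$ with $\delta < o(G)$, contradicting the minimality of $o(G)$ (the edge case $\delta = 0$ just means $V(G) = U'$ is finite, again contradicting $o(G) > 0$).

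For (2) $\Rightarrow$ (3), the key is a lemma proved by transfinite induction on $o(C)$: \emph{every rayless connected graph $C$ with $o(C) \geq \alpha$ contains a connected subgraph of order exactly $\alpha$}. Either $o(C) = \alpha$ and $H := C$ works, or $o(C) > \alpha$ and then for any reducing $U \subseteq V(C)$ some component $D$ of $C - U$ has $o(D) \geq \alpha$ while also $o(D) < o(C)$, so the induction hypothesis applies to $D$. Granted this, (2) supplies infinitely many components of $G - U$ of order $\geq \alpha$ for some reducing $U$, and extracting a connected subgraph of order exactly $\alpha$ from each produces the required pairwise disjoint family.

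The main obstacle is precisely this lemma, since the hypothesis only yields components of order $\geq \alpha$ and one must descend to order \emph{exactly} $\alpha$. The transfinite induction works because every reducing set strictly decreases the order of each component, so iterating eventually lands the order on $\alpha$. The remaining steps are essentially bookkeeping with the definitions, with the only mildly delicate point being the standard fact that a finite maximum of ordinals below $o(G)$ is itself below $o(G)$, which covers the successor and limit cases uniformly.
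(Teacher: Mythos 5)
The paper itself gives no proof of this theorem: immediately before the statement it says ``The proofs of the following statements are also exactly the same and therefore will not be repeated here,'' deferring entirely to Halin's proof of Proposition~3.5. So there is no in-paper argument to compare against, and your proof has to be judged on its own merits.

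Judged on those merits, your argument is correct. The cycle $(3)\Rightarrow(1)\Rightarrow(2)\Rightarrow(3)$ closes cleanly. A few small sanity checks worth recording: in $(3)\Rightarrow(1)$ the pairwise disjointness really does force some $H_i$ to avoid the finite witnessing set $U$, and then $H_i$, being connected and nonempty, sits in a single component of $G-U$; \cref{thm:halin-subset} supplies the contradiction. In $(1)\Rightarrow(2)$, when $\gamma_i=0$ you must have $\alpha=0$ (since $\gamma_i\ge\alpha$), and the choice $U_i:=V(C_i)$ then kills $C_i$ entirely, so the ``$<\delta$'' bound on all components of $G-U'$ holds; the degenerate case $\delta=0$ forces $V(G)=U'$ finite, which you correctly flag as contradicting $o(G)>0$. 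The only genuine idea you need for $(2)\Rightarrow(3)$ is the auxiliary lemma that a rayless connected graph of order $\ge\alpha$ contains a connected subgraph of order \emph{exactly} $\alpha$, and your transfinite induction proves it: if $o(C)>\alpha$, not all components of $C-U$ can have order $<\alpha$ (else $C\in A(\alpha)$), so some component $D$ has $\alpha\le o(D)<o(C)$ and the induction hypothesis applies. Since the paper offers no proof, your argument is not so much a ``different route'' as a self-contained reconstruction along the lines one would expect from Halin's recursive definition of order, using only that definition plus subgraph-monotonicity of $o$.
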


\begin{corollary}[{\cite[Corollary~3.8]{Halin1998}}]\label{cor:halin-order-cases}
    Let $G$ be rayless and $U \subsetneq V(G)$ be finite.
    Further let $\family{G_i}{i \in I}$ be the distinct components of $G - U$.
    Then we have
    \[
        o(G) = \begin{cases}
            \max_{i \in I} o(G_i) &\parbox[t]{.6\textwidth}{if this maximum exists and is attained\\only for finitely many $i \in I$,}\\
            \sup_{i \in I} o(G_i) &\parbox[t]{.6\textwidth}{if the $o(G_i)$ have no maximum,}\\
            \max_{i \in I} o(G_i) + 1 &\parbox[t]{.6\textwidth}{if this maximum exists and is attained by infinitely many $i \in I$.}
        \end{cases}
    \]
\end{corollary}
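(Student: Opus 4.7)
The plan is to verify each of the three cases separately by establishing matching upper and lower bounds on $o(G)$. The tools are the recursive definition of order, the subgraph monotonicity \cref{thm:halin-subset}, and the three-way characterization of $o(G) > \alpha$ from the preceding theorem.

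For the lower bound, each $G_i$ is a subgraph of $G$, so \cref{thm:halin-subset} gives $o(G) \geq o(G_i)$ for every $i \in I$. This immediately yields $o(G) \geq \max_{i \in I} o(G_i)$ in case one, and, since for any $\beta < \sup_{i \in I} o(G_i)$ some $G_i$ exceeds $\beta$, also $o(G) \geq \sup_{i \in I} o(G_i)$ in case two. Case three requires more: here I will invoke the third equivalent characterization in the preceding theorem, namely that $o(G) > \alpha$ whenever $G$ admits infinitely many pairwise disjoint connected subgraphs of order $\alpha$; these are supplied directly by the infinitely many $G_i$ attaining the maximum $\alpha$ (which are pairwise disjoint since they are distinct components of $G - U$).

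For the upper bound I will exhibit, in each case, a finite set $U^* \subseteq V(G)$ that reduces all components to the required order. In cases two and three the set $U$ itself already suffices: the components of $G - U$ are precisely the $G_i$, with orders either strictly below the supremum (case two) or at most the maximum (case three), giving $o(G) \leq \sup_{i \in I} o(G_i)$ and $o(G) \leq \max_{i \in I} o(G_i) + 1$ respectively. Case one needs slightly more work: letting $I_0 \subseteq I$ denote the finite set of indices attaining the maximum $\alpha$, pick a finite reducing set $U_i \subseteq V(G_i)$ for each $i \in I_0$ and set $U^* := U \cup \bigcup_{i \in I_0} U_i$. Since removing vertices from $G - U$ cannot merge its components, every component of $G - U^*$ lies inside some $G_i - U_i$ with $i \in I_0$ or inside some $G_j$ with $j \notin I_0$, and in both situations it has order strictly below $\alpha$ (using \cref{thm:halin-subset} for the latter), giving $o(G) \leq \alpha$.

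The only real subtlety is the degenerate subcase $\alpha = 0$ in case one, where no recursive reducing set of the $G_i$ is available. But then the hypothesis that only finitely many $G_i$ have order $0$ forces $I$ itself to be finite, whence $V(G)$ is finite and $o(G) = 0$ holds trivially. Everything else is routine bookkeeping on the characterization of $o(G) > \alpha$.
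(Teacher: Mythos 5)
Your proof is correct. The paper itself does not reprove this corollary---it simply cites Halin and remarks that the arguments carry over verbatim to multigraphs---so there is no in-paper proof to compare against, but your argument is the standard one: lower bounds via subgraph monotonicity (\cref{thm:halin-subset}) and, in the third case, via the characterization of $o(G)>\alpha$ by infinitely many disjoint connected subgraphs of order $\alpha$; upper bounds by exhibiting a suitable finite reducing set, which in the first case is $U$ enlarged by reducing sets of the finitely many components attaining the maximum. Your handling of the degenerate subcase $\alpha=0$ of case one (where no reducing set exists inside the maximal components, but finiteness of $V(G)$ is forced directly) is the right fix and is easy to overlook.
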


The following lemmas are not taken from Halin's paper and therefore, we will give proofs.

\begin{lemma}[{Schmidt 1982, \cite{Schmidt1982EinRF}}]\label{lem:rayless-finite-addition}
    Let $G$ be a graph and $L \subseteq V(G)$ be finite.
    If $G - L$ is rayless, then $G$ is rayless with $o(G) = o(G - L)$.
\end{lemma}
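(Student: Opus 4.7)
The plan is to handle the two assertions (raylessness of $G$ and equality of orders) separately, with the first being essentially free.

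For raylessness: any ray $R \subseteq G$ can meet the finite set $L$ only finitely often, so $R$ has a tail that avoids $L$ entirely; this tail is a ray in $G - L$, contradicting the hypothesis. Hence $G$ is rayless and $o(G)$ is well-defined.

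For the order equality, with $G$ now known to be rayless, \cref{thm:halin-subset} applied to $G - L \subseteq G$ immediately gives $o(G - L) \leq o(G)$. The substantive direction is $o(G) \leq o(G - L)$, which I would prove directly from the recursive definition of $A(\alpha)$. Set $\alpha \coloneq o(G - L)$. If $\alpha = 0$, then $G - L$ has finitely many vertices, and since $L$ is finite so does $G$, giving $o(G) = 0$. If $\alpha > 0$, pick a finite reducing set $U' \subseteq V(G - L)$ for $G - L$ and let $U \coloneq U' \cup L$. Then $U \subseteq V(G)$ is finite, and the set-theoretic identity $G - U = (G - L) - U'$ shows that every component of $G - U$ coincides with a component of $(G - L) - U'$, and therefore lies in some $A(\beta)$ with $\beta < \alpha$. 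The recursive definition of $A(\alpha)$ then gives $G \in A(\alpha)$, hence $o(G) \leq \alpha$. Combining both inequalities yields $o(G) = o(G-L)$.

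I do not expect any real obstacle: the whole argument rests on the simple observation that finitely many ``extra'' vertices can always be absorbed into a reducing set without breaking its finiteness. The only minor points worth verifying explicitly are that $U' \subseteq V(G - L)$ embeds naturally as a subset of $V(G)$, and that $G - (U' \cup L) = (G - L) - U'$, both of which are immediate.
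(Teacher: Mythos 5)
Your proposal is correct and follows the same core idea as the paper's proof: take a reducing set $U'$ for $G-L$, absorb $L$ into it, and use the identity $G - (L \cup U') = (G-L) - U'$ to conclude $G \in A(o(G-L))$. You are actually slightly more careful than the paper, explicitly handling the $o(G-L)=0$ case (where no reducing set exists) and establishing raylessness of $G$ up front before invoking \cref{thm:halin-subset}, whereas the paper glosses over the base case.
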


\begin{remark}
    This lemma can be viewed as being inverse to \cite[Corollary~3.7]{Halin1998}, which states that if $G$ is rayless and $L \subseteq V(G)$ is finite, then $o(G - L) = o(G)$.
\end{remark}

\begin{proof}[Proof of \cref{lem:rayless-finite-addition}]
    We will prove that $G$ is rayless with $o(G) \leq o(G - L)$.
    The claimed equality follows from \cref{thm:halin-subset}.

    Let $U$ be reducing in $G - L$ and $C \in \C(G - (L \cup U))$.
    We will show that $C$ is rayless with $o(C) < o(G - L)$, which shows the claim, since $L \cup U$ is finite by assumption.
    We have $G - (L \cup U) = (G - L) - U$, so $C \in \C((G - L) - U)$.
    Therefore, we have $o(C) < o(G - L)$, since $U$ is reducing in $G - L$.
\end{proof}

\begin{lemma}\label{lem:rayless-contraction}
    Let $G$ be a rayless graph and $\sim$ be contractible in $G$, such that there are only finitely many non-trivial
  equivalence classes.
    Then $G / {\sim}$ is also rayless with $o(G / {\sim}) \leq o(G)$.
\end{lemma}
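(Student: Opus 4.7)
The plan is to proceed by transfinite induction on $o(G)$, establishing simultaneously that $G/{\sim}$ is rayless and $o(G/{\sim}) \leq o(G)$ by showing $G/{\sim} \in A(o(G))$. The base case $o(G) = 0$ is immediate: $V(G)$ is finite, so $V(G/{\sim})$ is a quotient of a finite set and hence finite, giving $G/{\sim} \in A(0)$.

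For the inductive step with $o(G) = \alpha > 0$, I would choose a reducing set $U \subseteq V(G)$, let $N_1, \dots, N_k$ denote the finitely many non-trivial equivalence classes of $\sim$, and set
\[
    W \coloneq U \cup N_1 \cup \dots \cup N_k \subseteq V(G), \qquad U' \coloneq \pi(U) \cup \{N_1, \dots, N_k\} \subseteq V(G/{\sim}).
\]
The set $U'$ is finite, and the central step is to verify that the map $v \mapsto \dot{v}$ is a graph isomorphism $G - W \cong (G/{\sim}) - U'$. On vertices this holds because $\pi$ is injective off the non-trivial classes; on edges, one checks that an edge of $G$ lies in $(G/{\sim}) - U'$ exactly when both of its endpoints in $G$ avoid $W$, i.e.\ precisely when it is an edge of $G - W$.

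Via this isomorphism, components of $(G/{\sim}) - U'$ correspond bijectively to components of $G - W$. Each such component $C$ is a connected subgraph of $G - U$ (since $U \subseteq W$), so it lies inside some component $D$ of $G - U$; the reducing property of $U$ together with \cref{thm:halin-subset} then yields $o(C) \leq o(D) < \alpha$. Hence every component of $(G/{\sim}) - U'$ has order $< \alpha$, so $G/{\sim} \in A(\alpha)$ and $o(G/{\sim}) \leq \alpha = o(G)$. The only genuinely subtle point in the argument is choosing $U'$ large enough to absorb every non-trivial class $N_i$, which is what makes $\pi$ injective on $V(G) \setminus W$ and the isomorphism $G - W \cong (G/{\sim}) - U'$ work; once that is arranged, the induction runs immediately via \cref{thm:halin-subset}.
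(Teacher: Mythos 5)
Your proof is correct, but it takes a different, more hands-on route than the paper's. The paper identifies $H - \NN \cong G - \bigcup_i N_i$ as a subgraph of $G$ (where $\NN$ is the finite set of non-trivial classes), applies \cref{thm:halin-subset} to get $o(H - \NN) \leq o(G)$, and then invokes \cref{lem:rayless-finite-addition} with the finite set $L = \NN$ to conclude $o(H) = o(H - \NN) \leq o(G)$. Your argument instead verifies membership in $A(o(G))$ directly from the definition: you remove $W = U \cup \bigcup_i N_i$ from $G$ and the finite set $U' = \pi(U) \cup \NN$ from $G/{\sim}$, check the isomorphism $G - W \cong (G/{\sim}) - U'$, and observe that components of $G - W$ sit inside components of $G - U$ so have order $< o(G)$ by \cref{thm:halin-subset}. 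This essentially re-derives the content of \cref{lem:rayless-finite-addition} inside the specific setting; the paper's route is shorter because it has that lemma available as a module, while yours is self-contained and makes the reducing set for $G/{\sim}$ explicit. One stylistic remark: your ``transfinite induction'' never actually calls the induction hypothesis --- the inductive step is a direct verification that $G/{\sim} \in A(\alpha)$ once a reducing $U$ for $G$ is fixed --- so the argument is really a direct proof and would read more cleanly if presented that way.
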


\begin{proof}
    We write $H \coloneq G / {\sim}$ and $\NN$ for the set of non trivial equivalence classes under ${\sim}$ and note that $\NN \subseteq V(H)$.
    Thus, the vertices of $H - \NN$ consist only of trivial equivalence classes $\dot{x}$, which we identify with their unique vertex $x \in V(G)$.
    This allows us to consider $H - \NN \subseteq G$.
    By \cref{thm:halin-subset} $H - \NN$ is rayless with $o(H - \NN) \leq o(G)$.
    Since by assumption the set $\NN$ is finite, we can apply \cref{lem:rayless-finite-addition} to see that $H$ is rayless as well with $o(H) = o(H - \NN) \leq o(G)$.
\end{proof}

\subsection{Constructing well-balanced orientations}

In this section we will prove the core result about countable, rayless graphs, which, via the Reduction Theorem~\ref{thm:reduction-to-countable-case} to be proved in \cref{section:reduction} below implies the main result of this paper.

\begin{theorem}
\label{thm_countablerayless}
      Every countable, rayless graph admits a well-balanced orientation.
\end{theorem}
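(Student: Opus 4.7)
The plan is to proceed by transfinite induction on the order $o(G)$ of the countable rayless graph $G$. The base case $o(G) = 0$ is handled by \cref{lem:induction-start}, since $G$ then has only finitely many vertices. For the inductive step, assume $o(G) = \alpha > 0$ and that every countable rayless graph of order strictly less than $\alpha$ admits a well-balanced orientation. The block-cut tree of $G$ together with \cref{lem:well-balanced-cut-decomposition} (which applies because the block-cut tree has adhesion $\leq 1$) reduces the problem to orienting each block of $G$ individually, so we may further assume $G$ is $2$-connected (blocks whose order is strictly smaller than $\alpha$ are handled directly by the inductive hypothesis).

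Next, pick a reducing set $U \subseteq V(G)$, which is finite by definition. For every component $C$ of $G - U$ we have $o(C) < \alpha$, and since $N(C) \subseteq U$ is finite, \cref{lem:rayless-finite-addition} gives $o(G[V(C) \cup N(C)]) = o(C) < \alpha$. By the inductive hypothesis, each such extended component admits a well-balanced orientation. The plan now is to combine these partial orientations using the contraction framework of \cref{section:contractions}. First, I would invoke \cref{cor:edge-deletion-decontraction} to deal with every pair $v \neq w$ with $|E_G(v,w)| = \infty$, reducing to the case that all edge-multiplicities are finite. Then, case-splitting via \cref{cor:halin-order-cases} on whether the orders $o(C)$ of the components of $G - U$ achieve a maximum finitely often, infinitely often, or not at all, I would contract an appropriate set $N$ containing $U$ and possibly a suitable family of ``large'' components to a single vertex, so that the contracted graph $G' = (G - ||K||)/{\sim}$ is countable, rayless, and of order strictly less than $\alpha$ (using \cref{lem:rayless-contraction}). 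The inductive hypothesis then yields a well-balanced orientation of $G'$, which is lifted back to $G$ using \cref{cor:edge-deletion} and \cref{cor:edge-decontraction}.

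The main technical obstacle is to arrange these contractions so that two conditions hold simultaneously: (i) the contracted graph $G'$ truly has order $< \alpha$, and (ii) the infinite-connectivity hypotheses of the lifting corollaries are satisfied -- namely $\lambda_H(x,y) = \infty$ for every skeleton pair $xy \in K$ and $\alambda_G(x,y) = \infty$ for equivalent pairs $x \sim y$. Verifying condition (ii) is where \cref{thm:L-connected} will play its central role: the connectivity of $L(\vv{G}, N)$ whenever $\vv{G/N}$ is well-balanced will be used to bootstrap from the well-balanced orientations of the extended components up to the infinite directed connectivity required within any contracted set $N$. Closing the induction then amounts to orchestrating these lifts in the right order so that each application of a corollary is licensed by the output of the previous one.
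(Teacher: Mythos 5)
Your proposal has the right skeleton --- transfinite induction on $o(G)$ with \cref{lem:induction-start} as the base, a finite reducing set $U$, the contraction machinery of \cref{section:contractions}, \cref{lem:well-balanced-cut-decomposition}, and a role for \cref{thm:L-connected} in establishing the infinite directed connectivity needed for lifting. But the central step of your inductive argument, as written, does not go through.

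The key gap is your claim that one can "contract an appropriate set $N$ containing $U$ and possibly a suitable family of large components to a single vertex, so that the contracted graph $G' = (G - ||K||)/{\sim}$ is \ldots\ of order strictly less than $\alpha$." This is not achievable in general. \cref{lem:rayless-contraction} only yields $o(G/{\sim}) \le o(G)$, with no strict decrease. Concretely, if $\alpha$ is a successor and $G - U$ has infinitely many components of order $\alpha - 1$ (which is exactly the third case of \cref{cor:halin-order-cases}), then contracting $U$ to a point $u$ leaves those components hanging off $u$, and \cref{cor:halin-order-cases} applied to $G/U$ with reducing set $\{u\}$ gives $o(G/U) = \alpha$ again. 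Adding components to $N$ does not help: contracting a whole component $C$ into a single vertex would require $\alambda_G(x,y) = \infty$ for all $x,y \in V(C)$ in order to apply \cref{cor:edge-decontraction}, which is completely unavailable, and in any case would destroy the information needed to lift an orientation back inside $C$. Your preliminary pass with \cref{cor:edge-deletion-decontraction} to kill infinite edge-multiplicities has the same problem (the contracted graph need not drop in order) and is additionally not well-founded when there are infinitely many such pairs; it is also simply unnecessary.

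The paper avoids a single order-reducing contraction entirely. Instead, it builds an auxiliary simple graph $R$ on $U$ whose edges are the pairs joined by infinitely many components of $G-U$, lets $\sim$ be generated by the connected components of $R$, deletes the skeleton $||E(R)||$ (licensed by \cref{cor:edge-deletion}, since $\lambda_{G-||E(R)||}(x,y) = \infty$ for $x \sim y$), and then splits $\C(G-U)$ into the finite family $\Cless$ of components whose neighbourhood in $U$ crosses $\sim$-classes and the rest $\Cinf$. The inductive hypothesis is applied to \emph{many} pieces: to $H/{\sim_H}$, where $H$ carries $U$ and the finitely many $\Cless$-components (so $o(H) < o(G)$ by the first case of \cref{cor:halin-order-cases}), and to each $\overline{C}/N(C)$ with $C \in \Cinf$ (each has order $o(C) < o(G)$ by \cref{lem:rayless-finite-addition}). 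These are glued via a star-shaped tree-decomposition of $G/{\sim}$ of adhesion $\le 1$ (\cref{lem:well-balanced-cut-decomposition}), and only then is \cref{cor:edge-decontraction} used to lift to $G$. The remaining technical heart, which your sketch only gestures at, is the flipping argument: one must reverse the orientations of infinitely many $\overline{C}$ so that for each edge $xy$ of the auxiliary graph $L$ both $\alambda_G(x,y)$ and $\alambda_G(y,x)$ become infinite, and then combine a pigeonhole argument over the finitely many subgraphs of $KU$ with \cref{thm:L-connected} to show that $\sim$-equivalent vertices of $U$ lie in the same component of $L$. Without the flipping step, \cref{thm:L-connected} gives undirected connectivity in $L$ but not the directed infinite connectivity that \cref{cor:edge-decontraction} demands.
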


\begin{proof}
    Let $G$ be a countable rayless graph. We may assume that $G$ is connected.
    We will prove the assertion by transfinite induction on $o(G)$.
    The base case $o(G) = 0$, i.e.\ when $G$ has finitely many vertices, is covered by \cref{lem:induction-start}.
    If $o(G) > 0$, fix a finite reducing set $U$ in $G$.
    Let $f$ denote the incidence map of $G$.
    For all $C \in \C(G - U)$ we define a subgraph $\overline{C} \subseteq G$ as follows.
    \begin{align*}
        V(\overline{C}) &\coloneq V(C) \cup N(C)\\
        E(\overline{C}) &\coloneq E(C) \cup E(C, U)\\
        f_{\overline{C}} &\coloneq f|_{E(\overline{C})}.
    \end{align*}
    Then $N(C) \subseteq U$ is an independent set of vertices in $\overline{C}$.
    For every $xy \in \pairs{U}$ we define
    \[
        \C(xy) \coloneq \set{C \in \C(G - U)}{\xy \subseteq N(C)}.
    \]
    Define a simple graph $R$ by
    \begin{align*}
        V(R) &\coloneq U\\
        E(R) &\coloneq \set{xy \in \pairs{U}}{|\C(xy)| = \infty}
    \end{align*}
    and let $\sim$ denote the equivalence relation on $V(G)$ induced by the components of $R$ (and singletons of $V(G) - U$).
    Then for all $xy \in \pairs{U}$ with $x \sim y$ there exist
    $x_0, \dots, x_n \in U$ such that $x_0 = x$, $x_n = y$ and
    \begin{equation}\label{eq:N-generated-by-E-R}
        \forall i \in [n - 1] \colon |\C(x_i x_{i + 1})| = \infty .
    \end{equation}
   
    Without loss of generality, we may assume that $\sim$ is contractible in $G$: Indeed, $E(R)$ is a skeleton in $G$ and $\sim$ is contractible in $G - ||E(R)||$. Then \cref{eq:N-generated-by-E-R} yields $\lambda_{G - ||E(R)||}(x, y) = \infty$ for all $x \sim y$ (with transitivity through the components), so if $G - ||E(R)||$ has a well-balanced orientation, then so does $G$ by \cref{cor:edge-deletion}.

    Define
    \[
        \Cinf \coloneq \set{C \in \C}{\forall x, y \in N(C) \colon x \sim y},
    \]
    and let $\Cless$ be its complement in $\C$.
   
    \begin{claim}
        The set $\Cless$ is finite.
    \end{claim}

    \begin{proof}[Proof of claim]\renewcommand{\qedsymbol}{$\Diamond$}
        We rewrite $\Cless$ as follows.
        \begin{align*}
            \Cless
            &= \set{C \in \C}{\exists x, y \in N(C) \colon x \not\sim y} \\
            &\subseteq \set{C \in \C}{\exists x, y \in N(C) \colon x \neq y \text{ and } xy \notin E(R)} \\
            &= \set{C \in \C}{\exists xy \in \pairs{U} \colon \singleset{x, y} \subseteq N(C) \text{ and } xy \notin E(R)} \\
            &= \bigcup_{xy \in \pairs{U} \setminus E(R)} \C(xy).
        \end{align*}
        This final set is finite, as $|\C(xy)| < \infty$ for all $xy \notin E(R)$ by definition.
    \end{proof}
    
    Consider the induced subgraph
    \[
        H \coloneq G \left[ U \cup V \left( \bigcup \Cless \right) \right].
    \]
    With this definition we have $\Cless = \C(H - U)$.
    Now $\Cless$ is a finite collection of components of $G-U$, and hence $o(H) < o(G)$ by the first case of \cref{cor:halin-order-cases}.

    We define $\sim_H$ as the restriction of $\sim$ to the vertices of $H$ and note that the graph $H / {\sim_H}$ is a subgraph of $G / {\sim}$.
    The definition of $\Cinf$ allows us to also consider the graphs $\overline{C} / N(C)$ for $C \in \Cinf$ as subgraphs of $G / {\sim}$.\footnote{
        If we consider $\overline{C} / N(C)$ as a subgraph of $G / {\sim}$, then $N(C) \notin \overline{C} / N(C)$ in general, because the corresponding equivalence class in $V(G)$ might contain more vertices than $N(C)$.
    }

    The plan for the remaining part of the proof is now as follows.
    \begin{enumerate}
        \itemlabel{it:step-1}
            We will find well-balanced orientations $\vv{H / {\sim_H}}$ and $\vv{\overline{C} / N(C)}$ for all $C \in \Cinf$ in such a way, that
            \begin{equation}\label{eq:step-1}
                \forall xy \in \pairs{U} \colon x \sim y \then \alambda_G(x, y) = \infty
            \end{equation}
            is satisfied.
         \itemlabel{it:step-2}
            We conclude that their union induces a well-balanced orientation $\vv{G / {\sim}}$.
         \itemlabel{it:step-3}
            Finally the additional property \cref{eq:step-1} allows us to apply \cref{cor:edge-decontraction}, so that we can lift the well-balanced orientation from $\vv{G / {\sim}}$ to $\vv{G}$.
    \end{enumerate}

    We first deal with \ref{it:step-1}.
    Since $o(H / {\sim_H}) \leq o(H)$ by \cref{lem:rayless-contraction} and since $o(H) < o(G)$ as observed above, there exists a well-balanced orientation $\vv{H / {\sim_H}}$ by the induction hypothesis.
    Similarly, since $\overline{C} / N(C)$ is rayless with $o(\overline{C} / N(C)) = o(C)< o(G)$ by \cref{lem:rayless-finite-addition}, we get a well-balanced orientation  $\vv{\overline{C} / N(C)}$ by the induction hypothesis. The main challenge is to ensure \cref{eq:step-1}. For this, we  flip some of those orientations for some $C \in \Cinf$ determined as follows:    
    For each $xy \in \pairs{U}$ we define
    \[
        \D(xy) \coloneq \set{C \in \C(xy) \setminus \Cless}{\alambda_{\overline{C}}(x, y) \geq 1 \text{ or } \alambda_{\overline{C}}(y, x) \geq 1}.
    \]
    Define a simple graph $L$ by
    \begin{align*}
        V(L) &\coloneq U\\
        E(L) &\coloneq \set{xy \in \pairs{U}}{|\D(xy)| = \infty}.
    \end{align*}
    The number of edges of $L$ is finite and for each edge $xy \in E(L)$ the set $\D(xy)$ is infinite by definition.
Pick infinite subsets $\D'(xy) \subseteq \D(xy)$ such that the family $\{\D'(xy) \mid xy \in E(L)\}$ is pairwise disjoint. 
    Then for a fixed $xy \in E(L)$ we can flip the orientations of components in $\D'(xy)$, to assume that there are infinitely many components $C \in \D'(xy)$ with $\alambda_{\overline{C}}(x, y) \geq 1$ and also infinitely many components $C \in \D'(xy)$ with $\alambda_{\overline{C}}(y, x) \geq 1$.

    Since the sets $\D'(xy)$ for $xy \in E(L)$ are pairwise disjoint, we can assume this for all $xy \in E(L)$ simultaneously, i.e.\ in the (together with $\vv{H}$) induced orientation $\vv{G}$ we have
    \begin{equation}\label{eq:alambda-E-L}
        \forall xy \in E(L) \colon \alambda_{G}(x, y) = \infty \text{ and } \alambda_{G}(y, x) = \infty.
    \end{equation}
    We now show that this already suffices to prove \cref{eq:step-1} (which was stated for $\sim$ and not just $E(L)$).
    Let $xy \in \pairs{U}$ with $x \sim y$.
    We need to show that $\alambda_{G}(x, y) = \infty$.
    We may assume that $xy \in E(R)$, by induction on the vertices given by \cref{eq:N-generated-by-E-R}.
    Hence, $\C(xy)$ is infinite.
    Now if $xy \in E(L)$, we are done by \cref{eq:alambda-E-L}.
    But in general we might have $E(L) \subsetneq E(R)$, and in this case we need the following claim:

    \begin{claim}
        The vertices $x$ and $y$ are contained in the same component of $L$.
    \end{claim}

    \begin{proof}[Proof of claim]\renewcommand{\qedsymbol}{$\Diamond$}
        Denote by $KU$ the complete simple graph on the vertices $U$.
        For all $C \in \C(xy) \setminus \Cless$ the simple graph $L(\vv{\overline{C}}, N(C))$ is a simple subgraph of $KU$ (since $N(C) \subseteq U$).
        But there are only finitely many simple subgraphs of $KU$ and $\C(xy) \setminus \Cless$ is infinite (as we assumed $\C(xy)$ to be infinite above).
        So by the pigeonhole principle, there is an $C \in \C(xy) \setminus \Cless$ such that its corresponding subgraph $L(\vv{\overline{C}}, N(C)) \subseteq KU$ is attained by infinitely many other components in $\C(xy) \setminus \Cless$ as well.

        For all edges $x'y'$ of $L(\vv{\overline{C}}, N(C))$ we have by definition $\alambda_{\overline{C}}(x', y') \geq 1$ or $\alambda_{\overline{C}}(y', x') \geq 1$.
        Therefore, $\D(x'y')$ is infinite since $L(\vv{\overline{C}}, N(C))$ (and in particular the edge $x'y'$) is attained infinitely often.
        Hence, $x'y' \in E(L)$ by definition of $E(L)$.
        This proves
        \[
            E(L(\vv{\overline{C}}, N(C))) \subseteq E(L).
        \]
        Now $x, y \in V(L(\vv{\overline{C}}, N(C)))$ as $\xy \subseteq N(C)$, together with $L(\vv{\overline{C}}, N(C))$ being connected by \cref{thm:L-connected}, imply the claim.
    \end{proof}

    Using the claim, we can now let $P$ be a path from $x$ to $y$ in $L$.
    Then for each edge of $P$, we apply \cref{eq:alambda-E-L} and conclude $\alambda_G(x, y) = \infty$ by transitivity.
    This concludes the argument for step \ref{it:step-1}.

    We now turn towards  \ref{it:step-2}.  The graph $H / {\sim_H}$ together with all $\overline{C} / N(C)$ for $C \in \Cinf$ induces a tree-decomposition of $G$ of adhesion $\leq 1$ as follows.
    We define a tree $T$ with root $r$ and for every component $C \in \Cinf$ a child $t_C$ of the root.
    The resulting tree is a star with center $r$.
    We also let $V_r \coloneq V(H / {\sim_H})$ and $V_{t_C} \coloneq V(\overline{C} / N(C))$ (regarded as a subset of $V(G / {\sim})$) for all $C \in \Cinf$.

    \begin{claim}
        $(T, \family{V_t}{t \in V(T)})$ is a tree-decomposition of adhesion $\leq 1$ of $G / {\sim}$.
    \end{claim}

    \begin{proof}[Proof of claim]\renewcommand{\qedsymbol}{$\Diamond$}
        Condition \ref{it:tree-decomposition-t1} is satisfied, as we have $\bigcup_{t \in V(T)} V_t = V(G / {\sim}$.
        Condition \ref{it:tree-decomposition-t2} is satisfied as well and condition \ref{it:tree-decomposition-t3} is immediate from the fact that for any two $C,C' \in \Cinf$ we have $\overline{C} \cap \overline{C'} \subseteq U \subseteq H$.

        Now, we verify that the adhesion is $\leq 1$.
        As $T$ is a star with center $r$, the adhesion sets are given by the intersections $V_r \cap V_{t_C} = V(H / {\sim_H}) \cap V(\overline{C} / N(C))$ for $C \in \Cinf$. 
        Let $C \in \Cinf$.
        By definition of $\Cinf$ there exists an equivalence class $N \subseteq U$ with $N(C) \subseteq N$.
        As $G$ is connected, $N(C) \neq \emptyset$ and since we regard $\overline{C} / N(C) \subseteq G / {\sim}$ we have $N \in V(\overline{C} / N(C))$.
        Hence, $V(H / {\sim_H}) \cap V(\overline{C} / N(C)) = \singleset{N}$ contains (at most) one element.
    \end{proof}

    Hence, we obtain a well-balanced orientation $\vv{G / {\sim}}$ by \cref{lem:well-balanced-cut-decomposition}.
    Finally, for step \ref{it:step-3}, it suffices to observe that due to \ref{it:step-2} and \cref{eq:step-1}, the requirements of \cref{cor:edge-decontraction} are met.
    Thus, $G$ has a well-balanced orientation, and the induction step is complete.
\end{proof}

\section{Reduction to the countable case}\label{section:reduction}

In this section we establish \cref{thm:reduction-to-countable-case} announced in the introduction, that it suffices to prove the strong orientation conjecture for countable graphs.
Recall the following definitions from \cite[p.~262]{Laviolette2005}:

\begin{definition}\label{def:decomposition}
    Let $G$ be a graph and $\family{G_i}{i \in I}$ be a family of subgraphs.
    We call $\family{G_i}{i \in I}$ a \textbf{decomposition} of $G$, if the graphs are pairwise edge-disjoint and $G = \bigcup_{i \in I} G_i$.
    The graphs $G_i$ for $i \in I$ are called the \textbf{fragments} of the decomposition.
    The decomposition is called \textbf{bond-faithful} if any finite bond of $G_i$ is also a bond of $G$ for all $i \in I$.
\end{definition}

\begin{theorem}[{Laviolette's decomposition theorem, \cite[Theorem~3]{Thomassen2016}}]\label{thm:laviolette}
    Every graph has a bond-faithful decomposition in countable\footnote{
        Meaning countably many vertices and edges.
    } and connected subgraphs.\footnote{
        Thomassen's terminology differs: ``edge-decomposition'' translates to decomposition and ``cut-faithful'' translates to bond-faithful.
        Thomassen allows multiple parallel edges (as we do).
    }
\end{theorem}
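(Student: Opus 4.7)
The plan is to prove the theorem by transfinite recursion, peeling off one countable connected bond-faithful fragment at a time. Handling components separately, I may assume $G$ is connected. Having constructed pairwise edge-disjoint countable connected bond-faithful subgraphs $\family{G_\beta}{\beta < \alpha}$ with edge-union $H_\alpha \subseteq G$, if $E(H_\alpha) = E(G)$ the process stops; otherwise I pick any edge $e \in E(G) \setminus E(H_\alpha)$ and construct the next fragment $G_\alpha$ as a countable connected bond-faithful subgraph of $G' \coloneq G - E(H_\alpha)$ containing $e$. Each fragment removes countably many edges, so the recursion terminates after at most $|E(G)|$ steps with the required decomposition, and the entire substance of the theorem reduces to the construction of one such fragment.

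For that construction I would build an increasing chain of finite connected subgraphs $S_0 \subseteq S_1 \subseteq \cdots$ of $G'$ with $S_0 \coloneq \singleset{e}$, whose union $F \coloneq \bigcup_n S_n$ will be $G_\alpha$. A bookkeeping scheme enumerates, in a fair order, countably many tasks: each finite edge set $B \subseteq E(G')$ paired with pairs $(u, v)$ of vertices. When a task $(B, u, v)$ comes up and $u, v \in V(S_n)$, I first ask whether $B$ is a bond of $G$; if it is, nothing needs to be done. Otherwise, either $B$ fails to separate $G$, in which case I annex to $S_{n+1}$ a finite path in $G - B$ from $u$ to $v$; or $B$ is a cut but not minimal, in which case some $b \in B$ admits a path in $G - (B \setminus \singleset{b})$ between its endpoints, which I likewise annex. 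Each annexation ensures that $B$ will not be a bond of the eventual $F$ while maintaining connectedness of the chain. Interleaving these spoiling steps with a fair enumeration of the edges of $G'$ incident to $V(S_n)$ keeps $F$ countable and connected.

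The main obstacle is precisely this diagonalization. A finite set $B$ may become a bond of $F$ only in the limit, through a partition of $V(F)$ that is realized only once enough vertices have been added, so the scheme must process every finite $B$ across all pairs of vertices of $F$ rather than merely those $B$ that are bonds of the current $S_n$ — countably many tasks in total, all handled by priority. A cleaner alternative that avoids explicit bookkeeping is to pass to a countable elementary submodel $M \prec (H(\theta), \in)$ with $G, e \in M$ for a sufficiently large regular $\theta$, let $G_M$ be the subgraph of $G$ with edge set $E(G) \cap M$, and take $G_\alpha$ to be the component of $G_M$ containing $e$. Bond-faithfulness then follows from reflection: if $B$ is a finite bond of $G_\alpha$, then $B \subseteq M$, and any witness in $G$ to $B$ failing to be a bond of $G$ — either a path in $G - B$ joining the two sides, or a proper subset of $B$ that already separates — lies in $H(\theta)$ and hence, by elementarity, in $M$, and therefore inside $G_\alpha$ itself, contradicting that $B$ is a bond of $G_\alpha$.
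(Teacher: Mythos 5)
First, a point of orientation: the paper does not prove this statement at all --- it is imported as Laviolette's theorem in the form given by Thomassen \cite[Theorem~3]{Thomassen2016} --- so your proposal has to be judged against the known proofs rather than against anything in this paper. Your construction of a \emph{single} countable, connected subgraph of $G$ containing a prescribed edge, all of whose finite bonds are bonds of $G$, is essentially sound: one checks that a finite bond $B$ of such a fragment, with sides $X\ni u$ and $Y\ni v$ joined by an edge of $B$, can fail to be a bond of $G$ only if there is a $u$--$v$ path in $G-B$ (if the two sides are not joined in $G-B$, then $B$ is automatically a bond of $G$, since both sides of $B$ span connected subgraphs of the fragment minus $B$), and either annexation of such a path or reflection into a countable $M\prec H(\theta)$ places one inside the fragment, a contradiction. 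This is the standard closure argument, and your elementary-submodel variant of it is fine for the \emph{first} fragment.

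The gap is the iteration, and that is precisely where the substance of Laviolette's theorem lies. Bond-faithfulness requires every finite bond of every fragment $G_\alpha$ to be a bond of the \emph{original} graph $G$, yet your $G_\alpha$ must be built inside $G'\coloneq G-E(H_\alpha)$ to keep the fragments edge-disjoint. If you only arrange that finite bonds of $G_\alpha$ are bonds of $G'$, this does not suffice: a bond of $G'$ need not be a bond of $G$ (already $\{e_2\}$ is a bond of $G-e_1$ but not of $G$ when $e_1,e_2$ are parallel edges). If instead you aim at bonds of $G$, the repair step breaks down for $\alpha>0$: the witness to ``$B$ is not a bond of $G$'' is a path in $G-B$ joining the two sides, and every such path may be forced to use edges of the earlier fragments, so it can be neither annexed (in the diagonalization version) nor obtained inside $G_\alpha$ by elementarity (in the submodel version: reflection of ``there is a $u$--$v$ path in $G-B$'' yields a path whose edges lie in $E(G)\cap M$, not in $E(G')\cap M$, and the stronger statement ``there is such a path avoiding $E(H_\alpha)$'' need not be true in $H(\theta)$, so it cannot be reflected). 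Your proposal supplies no invariant guaranteeing that, after the earlier fragments are removed, every remaining edge still lies in a countable connected subgraph of $G'$ whose finite bonds are bonds of $G$; this is exactly the difficulty that the actual proofs (Laviolette's original argument, Thomassen's reproof, and the elementary-submodel treatments that build all fragments simultaneously from a continuous increasing chain of submodels) are designed to overcome. Consequently, the claim in your first paragraph that ``the entire substance of the theorem reduces to the construction of one such fragment'' is false, and the proof as written is incomplete.
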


We also need the following definitions for the next lemmas.

\begin{definition}
    Let $G$ be a graph.
    We write
    \[
        \C^E(G) \coloneq \set{C \in \C(G)}{E(C) \neq \emptyset}
    \]
    for the set of components of $G$ that contain at least one edge.
\end{definition}

\begin{definition}
    Let $G$ be a graph and $\family{G_i}{i \in I}$ be a decomposition of $G$.
    Let $x, y \in V(G)$ be two distinct vertices and $P$ be a path from $x$ to $y$ in $G$.
    We say that $P$ is \textbf{efficient (with respect to the decomposition $\family{G_i}{i \in I}$)} if
    \[
        \forall i \in I \colon |\C^E(P \cap G_i)| \leq 1.
    \]
\end{definition}

In other words, we require that $P$ meets each $G_i$ in (singleton vertices and) at most one non-trivial segment of $P$.
The notion of efficient paths is useful for the following lemma.

\begin{lemma}\label{lem:decomposition-connectivity}
    Let $G$ be a graph and let $\family{G_i}{i \in I}$ be a bond-faithful decomposition of $G$ into countable and connected subgraphs.
    Let $P$ be an efficient $x-y$-path in $G$.
    Furthermore let $x_0, \dots, x_{n + 1} \in V(P)$ be the pairwise distinct vertices enumerated along the order of $P$ such that $x_0 = x$, $x_{n + 1} = y$ and for all $m \in [n]$
    \[
        x_m P x_{m + 1} \in \C^E(P \cap G_{i_m})
    \]
    for some $i_m \in I$.
    Then for all $m \in [n]$
    \[
        \lambda_{G_{i_m}}(x_m, x_{m + 1}) \geq \minset{\lambda_G(x, y), \aleph_0}.
    \]
\end{lemma}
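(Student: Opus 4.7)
The natural strategy is a contradiction argument driven by bond-faithfulness: one refines a small cut in $G_{i_m}$ to a bond of $G_{i_m}$, transfers it to a bond of $G$, and uses the efficiency of $P$ to conclude that the transferred bond actually separates $x$ from $y$. So I would assume $\lambda_{G_{i_m}}(x_m, x_{m+1}) < \min\{\lambda_G(x,y), \aleph_0\}$ and set $k := \lambda_{G_{i_m}}(x_m, x_{m+1})$; then $1 \leq k < \lambda_G(x,y)$, the lower bound because the segment $x_m P x_{m+1}$ already witnesses one path in $G_{i_m}$. Menger's theorem applied in $G_{i_m}$ (only the easy finite-connectivity version is needed) produces an edge cut $F_0 \subseteq E(G_{i_m})$ of size $k$ separating $x_m$ from $x_{m+1}$.

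I would next refine $F_0$ to a genuine bond $F'$ of $G_{i_m}$. Let $X$ be the component of $x_m$ in $G_{i_m} - F_0$; by minimality of $F_0$, one checks that $F_0 = E_{G_{i_m}}(X, V(G_{i_m}) \setminus X)$. Let $Y$ be the component of $x_{m+1}$ in $G_{i_m}[V(G_{i_m}) \setminus X]$ and set $F' := E_{G_{i_m}}(Y, V(G_{i_m}) \setminus Y)$. Connectedness of $G_{i_m}$ forces every component of $V(G_{i_m}) \setminus X$ other than $Y$ to attach to $X$ through edges of $F_0$, so $G_{i_m}[V(G_{i_m}) \setminus Y]$ is connected; together with the connectedness of $G_{i_m}[Y]$ this makes $F'$ a finite bond of $G_{i_m}$ with $|F'| \leq |F_0| = k$, still separating $x_m$ from $x_{m+1}$.

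By bond-faithfulness, $F'$ is also a bond of $G$, giving a partition $V(G) = A \cupdot B$ with $F' = E_G(A, B)$, $x_m \in A$ and $x_{m+1} \in B$. Now the efficient structure of $P$ pays off. Efficiency forces the indices $i_0, \ldots, i_n$ labelling the consecutive edge-using segments of $P$ to be pairwise distinct, since a repeat would produce two edge-containing components of $P \cap G_{i_m}$. Consequently, for every $j \neq m$ the segment $x_j P x_{j+1}$ lies in $G_{i_j}$, whose edge set is disjoint from $E(G_{i_m}) \supseteq F'$. Telescoping along $P$, each $x_j$ with $j \leq m$ is therefore joined to $x_m$ by a walk avoiding $F'$ and hence lies in $A$, while each $x_j$ with $j \geq m+1$ lies in $B$. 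In particular $x = x_0 \in A$ and $y = x_{n+1} \in B$, so $F'$ is an $x$-$y$ edge cut in $G$ of size $\leq k < \lambda_G(x,y)$, contradicting the existence of $\lambda_G(x,y)$ edge-disjoint $x$-$y$ paths.

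The main obstacle I anticipate is the bond-refinement step: a minimum cut in $G_{i_m}$ need not itself be a bond, because its far side may fail to be connected, so to apply bond-faithfulness one must carefully descend to a bond of $G_{i_m}$ that still separates the two distinguished endpoints without exceeding the original size. Once that is arranged, efficiency of $P$ is the mechanism that translates the local obstruction in $G_{i_m}$ into a small global $x$-$y$ separator in $G$.
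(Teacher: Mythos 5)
Your proof is correct and follows essentially the same route as the paper's: assume the local connectivity in $G_{i_m}$ is too small, produce a finite bond of $G_{i_m}$ of that size separating $x_m$ from $x_{m+1}$, transfer it via bond-faithfulness to a bond of $G$, and use efficiency of $P$ to see that $x$ and $y$ land on opposite sides, giving a too-small $x$-$y$ cut. The paper simply asserts the existence of a minimum-size separating bond in $G_{i_m}$ (a standard consequence of Menger), whereas you spell out the refinement of a minimum cut to a bond; this is a fine elaboration, not a different argument.
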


\begin{proof}
    Let $\ell \coloneq \minset{\lambda_G(x, y), \aleph_0}$ and suppose for a contradiction that there exists an $m \in [n]$ with $\lambda_{G_{i_m}}(x_m, x_{m + 1}) < \ell$.

    Now $\ell \leq \aleph_0$, so $\lambda_{G_{i_m}}(x_m, x_{m + 1}) < \ell \leq \aleph_0$ is finite.
    Hence, there exists a finite bond $B$ in $G_{i_m}$ with $|B| = \lambda_{G_{i_m}}(x_m, x_{m + 1})$, which separates $x_m$ from $x_{m + 1}$.
    Since $\family{G_i}{i \in I}$ is a bond-faithful decomposition of $G$, $B$ is also a bond in $G$, separating $x_m$ from $x_{m + 1}$.

    Let $X$ and $Y$ be the two components of $G - B$ such that $x_m \in V(X)$ and $x_{m + 1} \in V(Y)$.
    We have $x_m P x_{m + 1} \in \C^E(P \cap G_{i_m})$ and since $P$ is efficient, this implies that both parts $x P x_m$ and $x_{m + 1} P y$ lie in $G - E(G_{i_m}) \subseteq G - B$ and therefore, $x \in V(X)$ and $y \in V(Y)$ as well.
    But this implies that $B$ separates $x$ from $y$ in $G$ with only $|B| = \lambda_{G_{i_m}}(x_m, x_{m + 1}) < \ell \leq \lambda_G(x, y)$ many edges, which is a contradiction.
\end{proof}

The next lemma shows that every path can be made efficient.

\begin{lemma}\label{lem:efficient}
    Let $G$ be a graph and let $\family{G_i}{i \in I}$ be a decomposition of $G$  into connected subgraphs.
    Let $x, y \in V(G)$ be two distinct vertices and $P$ be a path from $x$ to $y$ in $G$.
    Then there exists an efficient path $P'$ from $x$ to $y$ in $G$ such that
    \[
        \forall i \in I \colon E(P \cap G_i) = \emptyset \then E(P' \cap G_i) = \emptyset.
    \]
\end{lemma}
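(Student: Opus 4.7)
The plan is to obtain $P'$ by minimizing a carefully chosen integer-valued functional. Set $J \coloneq \{i \in I : E(P \cap G_i) \neq \emptyset\}$, which is finite since $P$ has only finitely many edges and the decomposition is edge-disjoint, and put $G' \coloneq \bigcup_{i \in J} G_i$. The collection of $x$-$y$-paths in $G'$ is non-empty because it contains $P$, and on it I would define
\[
    \rho(P') \coloneq \sum_{i \in J} |\C^E(P' \cap G_i)|,
\]
a non-negative integer. Choose $P'$ among these paths minimizing $\rho$. Since $E(P') \subseteq E(G')$, no fragment outside $J$ is touched, so the second conclusion of the lemma holds automatically, and it remains to show that this $P'$ is in fact efficient.

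Suppose for contradiction that some $G_i$ satisfies $|\C^E(P' \cap G_i)| \geq 2$. I would then let $u$ be the starting vertex of the first $G_i$-run of $P'$ and $v$ the ending vertex of the last one; these are distinct vertices of $G_i$. Using the connectedness of $G_i$, pick a $u$-$v$-path $Q \subseteq G_i$. The key construction is the walk $W$ from $x$ to $y$ obtained by concatenating $xP'u$, $Q$ and $vP'y$. From $W$, extract an $x$-$y$-path $P''$ by iteratively removing loops between repeated vertices. The goal is to show $\rho(P'') < \rho(P')$, contradicting the minimal choice of $P'$.

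For the fragment $G_i$, the subwalks $xP'u$ and $vP'y$ carry no $G_i$-edges by the choice of $u$ and $v$, so the $G_i$-edges of $W$ are exactly the edges of $Q$ and form one consecutive block in the edge sequence of $W$. A short check shows that this block structure persists under loop removal, because even when a removed interval lies inside the block, the remaining $G_i$-edges stay consecutive in the shortened sequence; whence $|\C^E(P'' \cap G_i)| \leq 1$. For any other fragment $G_j$ with $j \neq i$, every $G_j$-run of $P'$ lies entirely in one of the three segments $xP'u$, $uP'v$, $vP'y$, since no run can span $u$ or $v$ (the $P'$-edges adjacent to these vertices on the inside belong to $G_i$). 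Because $W$ replaces the middle segment by the $G_j$-free detour $Q$, we obtain $|\C^E(W \cap G_j)| \leq |\C^E(P' \cap G_j)|$, and combined with the general fact that loop removal never increases any fragment's run-count, this yields $|\C^E(P'' \cap G_j)| \leq |\C^E(P' \cap G_j)|$. Summing over $j \in J$ gives $\rho(P'') \leq \rho(P') - 1$, the desired contradiction.

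The main obstacle, and the only step that genuinely requires care, is the combinatorial lemma that deleting an interval $[a+1, b]$ from an edge sequence (which is what absorbing a loop at a repeated vertex $w_a = w_b$ amounts to) cannot raise any fragment's number of maximal monochromatic blocks. This follows from a direct count of transitions into each fragment at the cut point; the only mildly delicate sub-case is that in which the deleted interval starts or ends inside a run of the fragment, but then the potential boundary contribution is balanced out by at least one run-start lost inside the deleted interval.
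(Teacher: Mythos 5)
Your proof is correct, and it follows the same overall strategy as the paper: choose $P'$ minimizing the total count $\sum_i \lvert\C^E(P' \cap G_i)\rvert$ (the paper minimizes subject to $\lvert\C^E(P' \cap G_i)\rvert \leq \lvert\C^E(P \cap G_i)\rvert$ for all $i$, which is equivalent for the purpose of the second conclusion to your restriction $P' \subseteq G'$), and then derive a contradiction by replacing the middle segment of $P'$ with a detour through the connected fragment $G_i$. The genuine difference is your choice of anchor vertices. The paper takes $a$ and $b$ to be the first and last vertices of $P'$ lying in $V(G_i)$ \emph{at all}, whereas you take $u$ and $v$ to be the endpoints of the first and last \emph{$G_i$-runs}. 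With the paper's choice, the initial segment $xP'a$ and final segment $bP'y$ contain no $G_i$-vertices other than $a$ and $b$, so splicing in any $a$--$b$ path $R \subseteq G_i$ automatically yields a path, and the inequalities $\lvert\C^E(Q \cap G_i)\rvert = 1$ and $\lvert\C^E(Q \cap G_j)\rvert \leq \lvert\C^E(P' \cap G_j)\rvert$ for $j \neq i$ drop out directly. With your choice, $G_i$-vertices may still appear in $xP'u$ or $vP'y$ as isolated vertices (incident to no $G_i$-edge of $P'$), so your spliced object is only a walk, forcing you to extract a path by loop deletion and then to prove that loop deletion never increases any fragment's block count. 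That auxiliary monotonicity lemma is true --- deleting a contiguous interval of edges merges the two boundary blocks if they have the same color and otherwise just truncates them, so the count of ``block starts'' cannot go up --- but it adds a nontrivial extra layer that the paper sidesteps entirely by enlarging the replaced segment. In short: same core idea, but a suboptimal choice of cut vertices forces you into a walk-to-path reduction that can be avoided.
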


\begin{proof}
    Let $P'$ be a path from $x$ to $y$ in $G$ such that
    \[
        \forall i \in I \colon |\C^E(P' \cap G_i)| \leq |\C^E(P \cap G_i)|
    \]
    and $\sum_{i \in I} |\C^E(P' \cap G_i)| \in \N$ is minimal.
    Such a path exists, since $P$ is a candidate.
    The first property implies that $P'$ satisfies the implication from the claim, since $E(H) = \emptyset \iff |\C^E(H)| = 0$ for all graphs $H$.

    We are left to show that $P'$ is efficient.
    We suppose for a contradiction that there exists an $i \in I$ such that $|\C^E(P' \cap G_i)| \geq 2$.
    Let $a \in V(P' \cap G_i)$ be minimal along the order of $P'$ and $b \in V(P' \cap G_i)$ be maximal.
    By assumption $G_i$ is connected, so there exists a path $R$ path from $a$ to $b$ in $G_i$.
    Then we let $Q \coloneq x P' a R b P' y$ be the combined path from $x$ to $y$.

    Now $Q$ satisfies $|\C^E(Q \cap G_i)| = |\singleset{R}| = 1 < 2 \leq |\C^E(P' \cap G_i)|$ and $\forall j \in I, j \neq i \colon |\C^E(Q \cap G_j)| \leq |\C^E(P' \cap G_j)|$.
    Therefore, $Q$ contradicts the minimality of $\sum_{i \in I} |\C^E(P' \cap G_i)|$.
\end{proof}

\reductionIntro*

\begin{proof}
    Let $G \in \A$.
    We may assume that $G$ is connected, since the components are subgraphs and hence also in $\A$ and a well-balanced orientation on all components induces a well-balanced orientation on $G$.

    By \cref{thm:laviolette}, there exists a bond-faithful decomposition $\family{G_i}{i \in I}$ of $G$ into countable and connected subgraphs.
    The graphs $G_i$ for $i \in I$ are subgraphs of $G$ and hence in $\A$.
    They are countable, so by assumption there exist well-balanced orientations $\vv{G_i}$ of $G_i$ for all $i \in I$.
    We show that $\vv{G} \coloneq \bigcup_{i \in I} \vv{G_i}$ is a well-balanced orientation.

    Let $x, y \in V(G)$ be two distinct vertices.
    We distinguish two cases based on the cardinality of $\lambda_G(x, y)$.

    \begin{case}[$\lambda_G(x, y) \leq \aleph_0$]
        Since we assumed $G$ to be connected, we let $P$ be a path from $x$ to $y$ in $G$.
        By applying \cref{lem:efficient} we may assume that $P$ is efficient.
        Then we apply \cref{lem:decomposition-connectivity}.
        We get $\lambda_{G_{i_m}}(x_m, x_{m + 1}) \geq \minset{\lambda_G(x, y), \aleph_0} = \lambda_G(x, y)$ for all $m \in [n]$ with the notation as in \cref{lem:decomposition-connectivity}.

        Then we have for all $m \in [n]$ (using that $\vv{G_{i_m}}$ is well-balanced)
        \[
            \alambda_{G_{i_m}}(x_m, x_{m + 1}) \geq \begin{cases}
                \floor{\frac{\lambda_{G_{i_m}}(x_m, x_{m + 1})}{2}} \geq \floor{\frac{\lambda_G(x, y)}{2}} & \text{if } \lambda_{G_{i_m}}(x_m, x_{m + 1}) < \infty, \\
                \lambda_{G_{i_m}}(x_m, x_{m + 1}) \geq \lambda_G(x, y) & \text{else.}
            \end{cases}
        \]
        The second case can arise even if $\lambda_G(x, y) < \infty$, but in that case we have $\lambda_G(x, y) \geq \floor{\frac{\lambda_G(x, y)}{2}}$.
        Therefore, we get by transitivity
        \[
            \alambda_G(x, y) \geq \begin{cases}
                \floor{\frac{\lambda_G(x, y)}{2}} & \text{if } \lambda_G(x, y) < \infty \\
                \lambda_G(x, y) & \text{else.}
            \end{cases}
        \]
    \end{case}

    \begin{case}[$\lambda \coloneq \lambda_G(x, y) \geq \aleph_1$]
        For subsets $I' \subseteq I$ we write $G(I')$ for the subgraph $\bigcup_{i \in I'} G_i \subseteq G$.
        By transfinite recursion, we define for all ordinals $\alpha < \lambda$ pairwise disjoint subsets $(I_\alpha \colon \alpha < \lambda)$ of $I$ such that for all $\alpha < \lambda$
        \begin{enumerate}
            \item
                $I_\alpha$ is finite,
            \item
                $x, y \in V(G(I_\alpha))$ and
            \itemlabel{it:alambda-aleph_0}
                $\alambda_{G(I_\alpha)}(x, y) = \aleph_0$.
        \end{enumerate}
        Let $\alpha < \lambda$ and assume that all $(I_\beta \colon \beta < \alpha)$ have been defined.
        Let $J \coloneq \bigcup_{\beta < \alpha} I_\beta$ and $F \coloneq E(G(J))$.\footnote{
            For $\alpha = 0$ both sets are simply empty.
        }
        Since the graphs of the decomposition have only countably many edges and $\aleph_1 \leq \lambda$, we have
        \[
            |F| = \big|\bigcup_{\beta < \alpha} \bigcup_{k \in I_\beta} E(G_k)\big| \leq \alpha \cdot \aleph_0 \cdot \aleph_0 < \lambda = \lambda_G(x, y).
        \]
        Therefore there exists a path $P$ from $x$ to $y$ in $G - F$.
        By definition of $F$, the sets $E(P \cap G_\beta)$ are now empty for all $\beta \in J$.
        Similar to before we can apply \cref{lem:efficient} to assume that $P$ is efficient and with the additional property given by the lemma, the sets $E(P \cap G_\beta)$ are still empty for all $\beta \in J$.
        Hence, $P$ is an efficient path from $x$ to $y$ in $G - F$.
        We let
        \[
            I_\alpha \coloneq \set{k \in I}{E(P \cap G_k) \neq \emptyset}
        \]
        be the indices of the fragments that contain at least one edge of $P$.
        We now verify that $I_\alpha$ satisfies the claimed properties.

        The set $I_\alpha$ is disjoint to all $(I_\beta \colon \beta < \alpha)$, since $P$ is edge-disjoint from $F$.
        Since $P$ contains only finitely many edges, $I_\alpha$ is finite.
        Also $x, y \in V(P)$, so $x, y \in V(G(I_\alpha))$.
        To check the last property $\alambda_{G(I_\alpha)}(x, y) = \aleph_0$, we apply \cref{lem:decomposition-connectivity} to $P$ and $x, y \in V(G)$.
        Similar to the previous case the lemma shows $\lambda_{G_{i_m}}(x_m, x_{m + 1}) \geq \minset{\lambda_G(x, y), \aleph_0} = \aleph_0$ for all $m \in [n]$ with the notation as in \cref{lem:decomposition-connectivity}.
        We have $i_m \in I_\alpha$ for all $m \in [n]$ by definition of $I_\alpha$.
        Using that the $\vv{G}_{i_m}$ are well-balanced for all $m \in [n]$, we get $\alambda_{G(I_\alpha)}(x, y) \geq \aleph_0$ by transitivity.
        This concludes the recursion.

        We still need to show that $\alambda_G(x, y) = \lambda_G(x, y) = \lambda$.
        But for this we simply choose a single directed path $P_\alpha$ from $x$ to $y$ in $\vv{G}(I_\alpha)$ for each $\alpha < \lambda$ using property \ref{it:alambda-aleph_0}.
        Since the sets $(I_\alpha \colon \alpha < \lambda)$ are pairwise disjoint, the directed paths $(P_\alpha \colon \alpha < \lambda)$ will be pairwise edge-disjoint.\qedhere
    \end{case}
\end{proof}

We now have all ingredients to prove the main result of this paper:

\thmraylessintro*

\begin{proof}
    Let $\A$ be the class of all rayless graphs. Since $\A$ is closed under subgraphs, \cref{thm:reduction-to-countable-case} implies that in order to find a well-balanced orientation for all rayless graphs, it suffices to find well-balanced orientations for all countable rayless graphs. But every countable rayless graph has a well-balanced orientation by \cref{thm_countablerayless}.
\end{proof}

\bibliographystyle{plain}
\bibliography{references}

\end{document}